\newcommand{\eps}{\varepsilon}
\newcommand{\weak}{\rightharpoonup}
\newcommand{\one}{\mathbbm{1}}
\newcommand{\CR}{\mathbb{R}}
\newcommand{\CN}{\mathbb{N}}
\newcommand{\loc}{\mathrm{loc}}
\newcommand{\A}{\mathscr{A}}
\newcommand{\cL}{\mathscr{L}}
\newcommand{\half}{\frac{1}{2}}
\newcommand{\ip}[2]{\ifthenelse{\equal{#1}{}}{\mbox{$ [ \,\cdot\, , \, \cdot \, ] $}}{
\mbox{$ \left[ #1 \, , \, #2 \right]$}}}
\newcommand{\norm}[1]{\ifthenelse{\equal{#1}{}}{\mbox{$\|\cdot\|$}}{\mbox{$\| #1 \|$}}}
\newcommand{\dual}[2]{\ifthenelse{\equal{#1}{}}{\mbox{$ \langle \,\cdot\; , \; \cdot \, \rangle $}}{
\mbox{$ \langle #1   ,  #2 \rangle$}}}
\newtheorem{hyp}[theorem]{Hypothesis}
\begin{document}

\title*{Kernel estimates for nonautonomous Kolmogorov equations with potential term}
\titlerunning{Kernel estimates for nonautonomous Kolmogorov equations with potential term}
\author{Markus Kunze, Luca Lorenzi\thanks{Thank you dad, for having conveyed your love for mathematics to me!}, Abdelaziz Rhandi\\
\vspace{1cm}
\hspace*{2cm}{\sc To the memory of Prof. Alfredo Lorenzi}}
\authorrunning{M. Kunze, L. Lorenzi, A. Rhandi}
\institute{Markus Kunze \at Graduiertenkolleg 1100, University of Ulm, 89069 Ulm, Germany, \email{markus.kunze@uni-ulm.de}
\and Luca Lorenzi \at Dipartimento di Matematica e Informatica, Universit\`a degli Studi di Parma, Parco Area delle Scienze 53/A, 43124 Parma, Italy, \email{luca.lorenzi@unipr.it}
\and Abdelaziz Rhandi \at Dipartimento di Ingegneria dell'Informazione, Ingegneria Elettrica e Matematica Applicata, Universit\`a degli Studi di Salerno, Via Ponte Don Melillo 1, 84084 Fisciano (Sa), Italy,
\email{arhandi@unisa.it}}
%
%

\maketitle

\abstract{Using time dependent Lyapunov functions, we prove pointwise upper bounds for the heat kernels of some  nonautonomous Kolmogorov operators with possibly unbounded drift and diffusion coefficients and a possibly unbounded potential term.}

\section{introduction}
We consider nonautonomous evolution equations
\begin{equation}\label{eq.nee}
\left\{ \begin{array}{rlll}
\partial_t u(t,x) & = & \A (t)u(t,x), & (t,x) \in (s, 1]\times \CR^d,\\[1mm]
u(s,x) & = & f(x), & x \in \CR^d\, ,
\end{array}\right.
\end{equation}
where the time dependent operators $\A (t)$ are defined on smooth functions $\varphi$ by
\[
\A (t)\varphi (x) = \sum_{ij=1}^dq_{ij}(t,x)D_{ij}\varphi (x) + \sum_{i=1}^dF_i(t,x)D_i\varphi (x) - V(t,x)\varphi (x).
\]
We write $\A_0(t)$ for the operator $\A(t) +V(t)$.
Throughout this article, we will always assume that the following hypothesis on the coefficients are satisfied.
\begin{hyp}\label{hyp1}
The coefficients $q_{ij}, F_j$ and $V$ are defined on $[0,1]\times \CR^d$ for $i,j=1, \ldots d$. Moreover,
\begin{enumerate}
\item there exists an $\varsigma \in (0,1)$ such that $q_{ij}, F_j, V \in C^{\frac{\varsigma}{2}, \varsigma}_\loc ([0,1]\times \CR^d)$ for all $i,j =1, \ldots, d$. Further, $q_{ij} \in C^{0,1}((0,1)\times \CR^d)$;
\item
the matrix $Q = (q_{ij})$ is symmetric and uniformly elliptic in the sense that there exists a number $\eta > 0$ such that
\begin{eqnarray*}
\sum_{i,j=1}^d q_{ij}(t,x)\xi_i\xi_j \geq \eta |\xi|^2 \quad \mbox{for all}\,\, \xi \in \CR^d,\,\, (t,x) \in [0,1]\times \CR^d;
\end{eqnarray*}
\item $V \geq 0$;
\item
there exist a nonnegative function $Z\in C^2(\CR^d)$ and a constant $M \geq 0$ such that $\lim_{|x|\to\infty}Z(x) = \infty$
and we have $\A(t)Z(x) \leq M$, as well as $\eta \Delta_x Z(x) + F(t,x)\cdot \nabla_xZ(x)-V(t,x)Z(x) \leq M$, for all $(t,x) \in [0,1]\times \CR^d$;
\item
there exists a nonnegative function $Z_0\in C^2(\CR^d)$ such that $\lim_{|x|\to\infty}Z_0(x) = \infty$
and we have $\A_0(t)Z_0(x) \leq M$, as well as $\eta \Delta_x Z_0(x) + F(t,x)\cdot \nabla_x Z_0(x) \leq M$, for all $(t,x) \in [0,1]\times \CR^d$.
\end{enumerate}
\end{hyp}

We summarize Hypothesis \ref{hyp1}(4)-(5) saying that $Z$ (resp. $Z_0$) is {\it a Lyapunov function} for the operators $\A$ and $\eta\Delta +F\cdot \nabla_x -V$
(resp. for the operators $\A_0$ and $\eta\Delta +F\cdot\nabla_x$).

Clearly, $5$ implies $4$. However, for applications it will be important to differentiate between $Z$ and $Z_0$.

The previous assumptions guarantee that, for any $f\in C_b(\CR^d)$, the Cauchy problem \eqref{eq.nee} admits a unique solution $u \in C_b([s,1]\times \CR^d) \cap C^{1,2}((s,1]\times \CR^d)$. Moreover,
there exists an evolution family $(G(t,s))_{(t,s) \in D} \subset \cL (C_b(\CR^d))$, where $D = \{(t,s) \in [0,1]^2 : t \geq s\}$, which governs Equation \eqref{eq.nee}, i.e.,
 $u(t,x) = (G(t,s)f)(x)$. Here and throughout the paper, the index ``$b$'' stands for boundedness.

By \cite[Proposition 3.1]{al11}, the operators $G(t,s)$ are given by \emph{Green kernels} $g(t,s, \cdot, \cdot)$, i.e.,\ we have
\begin{equation}
G(t,s)f(x) = \int_{\CR^d} f(y)g(t,s, x,y)\, dy.
\label{kernel-G}
\end{equation}

Our aim is to prove estimates for the Green kernel $g$. Similar results as we present here have been obtained in \cite{mpr10, ms07, MST, s08} for autonomous equations without potential term. The case of autonomous equations with potential term was treated in \textcolor{red}{\cite{alr10,lmpr11,lr12}}. Recently, generalizing techniques from \cite{ffmp09} to the parabolic situation, the
authors of the present article extended these results also to nonautonomous equations and, even more importantly, allowed also unbounded diffusion coefficients, see \cite{klr13}. In this article, we extend the results of \cite{klr13} to also allow potential terms in the equation.

Applying our main abstract result (Theorem \ref{t.main}) in a concrete situation, we obtain the following result.
In its formulation,
for $s \geq 0$, we use the notation $|x|_{*}^s$ to denote a smooth version of the $s$-th power of the absolute value function, i.e., $|x|_*^s = |x|^s$ whenever $|x|\geq 1$ and the map $x \mapsto |x|_*^s$ is twice continuously differentiable in $\CR^d$.
This is done to meet the differentiability requirement in Hypothesis \ref{hyp1}(1), 3 and 5 and also later differentiability requirements.
If $s=0$ or $s >1$ we can choose
$|x|_*^s = |x|^s$ for any $x\in\CR^d$ as this is already twice continuously differentiable.

\begin{theorem}\label{t.example}
Let $k>d+2$, $m, r\geq 0$ and $p> 1$ be given with $p> m-1$ and $r > m-2$. We consider the (time independent) operator $\A (t) \equiv \A$, defined on smooth functions $\varphi$ by
\[
\A \varphi (x) = (1+|x|_{*}^m)\Delta\varphi (x) - |x|^{p-1}x\cdot \nabla \varphi (x) - |x|^r\varphi(x).
\]
Then we have the following estimates for the associated Green kernel $g$:
\begin{enumerate}
\item if $p\geq \half (m+r)$, then for $\alpha > \frac{p+1-m}{p-1}$ and $\eps < \frac{1}{p+1-m}$ we have
\[
g(t,s,x,y) \leq C(t-s)^{1-\frac{\alpha(m\vee p)k}{p+1-m}} e^{-\eps (t-s)^\alpha |y|_{*}^{p+1-m}};
\]
\item if $p < \half (m+r)$, then for $\eps < \frac{2}{r+2-m}$ and $\alpha > \frac{r-m+2}{r+m-2}$, if $r+m>2$, and
$\alpha>\frac{r+2-m}{2(p-1)}$, if $r+m\le 2$, we have
\[
g(t,s,x,y) \leq C(t-s)^{1-\frac{\alpha(2m\vee 2p\vee r)k}{r+2-m}} e^{-\eps (t-s)^\alpha |y|_{*}^{\frac{1}{2}(r+2-m)}},
\]
for all $x,y \in \CR^d$ and $s \in [0,t)$.
\end{enumerate}
Here, $C$ is a positive constant.
\end{theorem}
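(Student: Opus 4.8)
The plan is to apply the abstract kernel bound (Theorem~\ref{t.main}) to the operator $\A$, after first checking that Hypothesis~\ref{hyp1} holds and then, in each of the two regimes, exhibiting an explicit time‑dependent Lyapunov function producing the asserted decay.

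\textbf{Step 1 (Hypothesis~\ref{hyp1}).} Here $q_{ij}(t,x)=(1+|x|_*^m)\delta_{ij}$, $F(t,x)=-|x|_*^{p-1}x$ and $V(t,x)=|x|_*^r$; with the smoothed powers these coefficients are locally H\"older continuous (and $q_{ij}$ locally Lipschitz in space), so (1) holds, while (2) holds with $\eta=1$ and (3) is clear since $r\ge0$. For (5) — which, as noted in the text, implies (4) — I will take $Z_0(x)=1+|x|_*^k$. A direct computation gives, for $|x|\ge1$,
\[
\A_0(t)Z_0(x)=k(k+d-2)(1+|x|^m)|x|^{k-2}-k|x|^{k+p-1},
\]
and since $p>m-1$ one has $k+p-1>k+m-2$, so the negative term dominates for large $|x|$ and $\A_0(t)Z_0$ is bounded above by some $M$; the same estimate with the diffusion coefficient replaced by $1$ shows $\Delta_x Z_0+F\cdot\nabla_xZ_0\le M$ after enlarging $M$. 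Hence Hypothesis~\ref{hyp1} holds and Theorem~\ref{t.main} is available.

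\textbf{Step 2 (the time‑dependent Lyapunov function).} Fix $s\in[0,1)$ and set $\beta=p+1-m$ in case~(1) and $\beta=\tfrac12(r+2-m)$ in case~(2); note $\beta>0$ by $p>m-1$, $r>m-2$. I will apply Theorem~\ref{t.main} with the weight
\[
\mathscr{V}(t,y)=\eps\,(t-s)^\alpha|y|_*^\beta,\qquad (t,y)\in(s,1]\times\CR^d,
\]
which vanishes at $t=s$, tends to $\infty$ as $|y|\to\infty$ for every $t>s$, and is $C^{1,2}$ thanks to the smoothing. The heart of the argument is to check the differential inequality required by Theorem~\ref{t.main}, which for $w=e^{\mathscr{V}}$ and $|y|\ge1$ reads (after dividing by $w$)
\[
\partial_t\mathscr{V}+(1+|y|^m)\bigl(|\nabla_y\mathscr{V}|^2+\Delta_y\mathscr{V}\bigr)-|y|^{p-1}y\cdot\nabla_y\mathscr{V}-|y|^r\le h(t)
\]
with $h\in L^1(s,1)$. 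Writing $a(t)=\eps(t-s)^\alpha$, the left‑hand side equals
\[
\eps\alpha(t-s)^{\alpha-1}|y|^\beta+(1+|y|^m)\bigl(a(t)^2\beta^2|y|^{2\beta-2}+a(t)\beta(\beta+d-2)|y|^{\beta-2}\bigr)-a(t)\beta|y|^{p+\beta-1}-|y|^r .
\]
The choice of $\beta$ gives $m+2\beta-2=2p-m$ in case~(1) and $m+2\beta-2=r$ in case~(2); moreover in case~(1) one has $p+\beta-1=2p-m$, while in case~(2) $p+\beta-1<r$ precisely because $p<\tfrac12(m+r)$. Hence the leading competition is between $a(t)^2\beta^2|y|^{2\beta-2+m}$ and, respectively, $-a(t)\beta|y|^{2p-m}$ (case~(1)) or $-|y|^r$ (case~(2)), and in both cases the negative term wins as soon as $a(t)\beta<1$; since $0\le t-s\le1$ this follows from $\eps\beta<1$, i.e. from $\eps<\tfrac1{p+1-m}$ in case~(1) and $\eps<\tfrac2{r+2-m}$ in case~(2). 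The conditions on $\alpha$ are then exactly what is needed to dominate the remaining term $\eps\alpha(t-s)^{\alpha-1}|y|^\beta$ by the negative leading term and to make the resulting $h(t)$ — essentially a constant times $(t-s)^{\alpha-1}$ — integrable on $(s,1)$: case~(1) and the part $r+m\le2$ of case~(2) reduce to a balance at $|y|\sim(t-s)^{-1/(p-1)}$, which is where the thresholds $\tfrac{p+1-m}{p-1}$ and $\tfrac{r+2-m}{2(p-1)}$ originate, while for $r+m>2$ one balances $|y|^\beta$ against $|y|^r$, giving $\tfrac{r-m+2}{r+m-2}$.

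\textbf{Step 3 (conclusion).} Feeding $\mathscr{V}$ together with the polynomial Lyapunov function $Z_0=1+|x|_*^k$ (the requirement $k>d+2$ is what secures the integrability built into Theorem~\ref{t.main}) into that theorem yields $g(t,s,x,y)\le C(t-s)^{-\gamma}e^{-\mathscr{V}(t,y)}$, with $\gamma$ the explicit exponent the theorem produces from $k$, the growth rates of the coefficients and $\beta$; tracking the constants gives $\gamma=\tfrac{\alpha(m\vee p)k}{p+1-m}-1$ in case~(1) and $\gamma=\tfrac{\alpha(2m\vee 2p\vee r)k}{r+2-m}-1$ in case~(2), which is the claim. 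I expect the main obstacle to be Step~2: getting the interplay between the spatial orders of the various terms and the time weight $(t-s)^\alpha$ exactly right, so that the single differential inequality of Theorem~\ref{t.main} holds uniformly on $(s,1]\times\CR^d$ with an $L^1$ right‑hand side — this is precisely what forces the dichotomy $p\gtrless\tfrac12(m+r)$ and the sub‑case $r+m\gtrless2$, and it is there that the sharp thresholds for $\eps$ and $\alpha$ are pinned down.
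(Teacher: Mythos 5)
Your overall strategy --- apply Theorem \ref{t.main} with exponential weights $e^{\eps(t-s)^\alpha|y|_*^\beta}$ and extract the thresholds for $\alpha$ and $\eps$ from a Lyapunov-type differential inequality --- is the paper's strategy, and your Step 2 analysis of the leading-order competition (the dichotomy $p\gtrless\frac{1}{2}(m+r)$, the sub-case $r+m\gtrless 2$, the balance points $|y|\sim(t-s)^{-1/(\beta+m-2)}$ resp.\ $(t-s)^{-1/(p-1)}$) matches Proposition \ref{p.example}. But there are genuine gaps. First, your choice $Z_0(x)=1+|x|_*^k$ cannot serve as the Lyapunov function that the abstract machinery runs on: Definition \ref{d.lyap}(1) requires $W(s,\cdot)\le Z$, Proposition \ref{p.lyapunov} deduces integrability of $W$ against $g_{t,s}(x,dy)$ by domination with $Z$, and Hypothesis \ref{hyp3} demands $W_1\le W_2\le c_0Z^{1-\sigma}$; a polynomial cannot dominate the exponential time-dependent Lyapunov functions you need. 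The paper instead takes $Z(x)=\exp(\delta|x|_*^\beta)$ and $Z_0(x)=\exp(\delta|x|_*^{p+1-m})$ with $\delta<1/\beta$, and $p>m-1$, $r>m-2$ are exactly what make these admissible. Relatedly, $k>d+2$ has nothing to do with integrating a polynomial Lyapunov function; it is the exponent in the parabolic $L^\infty$-estimate behind Theorem \ref{t.mainbdd}.

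Second, Theorem \ref{t.main} does not run on a single function $\mathscr{V}$ and one differential inequality: it needs a weight $w$ \emph{and} two time-dependent Lyapunov functions $W_1\le W_2$ satisfying the pointwise comparison inequalities (i)--(vii) of Hypothesis \ref{hyp2}(2) and the two extra bounds of Hypothesis \ref{hyp3}(2). The paper therefore takes three nested exponentials with $\eps_0<\eps_1<\eps_2<\delta$, verifies each inequality with constants $c_j\lesssim(t-b_0)^{-\gamma_j}$ (computing $\gamma_2,\dots,\gamma_9$ explicitly, e.g.\ $\gamma_6=\alpha p/\beta$, $\gamma_7=\alpha r/(2\beta)$), and then chooses $a_0,b_0$ depending on $s$ and $t$ so that $t-b_0=(t-s)/2$ and $b_0-a_0\le t-s$. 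It is this bookkeeping, combined with $\zeta_{W_j}(s,x)\le e^{\int_s^t h}W_j(t,x)=e^{\int_s^t h}$, that produces the prefactors $(t-s)^{1-\alpha(m\vee p)k/(p+1-m)}$ and $(t-s)^{1-\alpha(2m\vee 2p\vee r)k/(r+2-m)}$; your Step 3 asserts these exponents without deriving them. Finally, you leave Hypothesis \ref{hyp3}(3) (boundedness of the potential-free kernels $g_n^0$) unaddressed; the paper disposes of it by invoking the corresponding Lyapunov-function argument from \cite{klr13}.
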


These bounds should be compared to the ones in \cite[Example 3.3]{alr10}, where the case $m=0$ was considered. We would like to note that in Theorem \ref{t.example} we have restricted ourselves to the autonomous situation so that one can compare the results with those in \cite{alr10}. Genuinely nonautonomous examples can easily be constructed along the lines of \cite[Section 5]{klr13}.

\section{Time dependent Lyapunov functions}
In this section we introduce time dependent Lyapunov functions and prove that they are integrable with respect to the measures
$g_{t,s}(x, dy):=g(t,s,x,y)dy$, where
$g(t,s,\cdot,\cdot)$ is the Green kernel associated to the evolution operator $G(t,s)$, see \eqref{kernel-G},
and $g(t,\cdot,x,\cdot)\in L^1((0,1)\times\CR^d)$.
To do so, it is important to have information about the derivative of $G(t,s)f$ with respect to $s$.
We have the following result, taken from \cite[Lemma 3.4]{al11}. Here and in the rest of the paper, the index ``$c$''
stands for compactly supported.

\begin{lemma}\label{l.sderivative}
\begin{enumerate}
\item
For $f \in C_c^2(\CR^d)$, $s_0\leq s_1 \leq t$ and $x \in \CR^d$ we have
\begin{equation}\label{eq.sderivative1}
G(t,s_1)f(x) - G(t,s_0)f(x) = -\int_{s_0}^{s_1}G(t,\sigma)\A (\sigma )f(x)\, d\sigma.
\end{equation}
\item For $f \in C^2(\CR^d)$, constant and positive outside a compact set, the function $G(t,\cdot )\A (\cdot)f(x)$
is integrable in $[0,t]$ and for $s_0\leq s_1 \leq t$ we have
\begin{eqnarray*}
G(t,s_1)f(x) - G(t,s_0)f(x) \geq - \int_{s_0}^{s_1}G(t,\sigma)\A(\sigma)f(x)\, d\sigma.
\end{eqnarray*}
\end{enumerate}
\end{lemma}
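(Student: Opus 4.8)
The plan is to derive both assertions by integrating, over $\sigma\in[s_0,s_1]$, the \emph{backward Kolmogorov equation}
\[
\partial_\sigma(G(t,\sigma)f)(x)=-(G(t,\sigma)\A(\sigma)f)(x),
\]
so that the real work is to justify this identity --- and, in part (2), the integrability of its right-hand side --- for the two classes of data. The tool I would use throughout is approximation by bounded domains: for $n\in\CN$ let $G_n(t,s)$ be the evolution family generated by $\A(\cdot)$ on $B_n$ with homogeneous Dirichlet conditions on $\partial B_n$. On the bounded parabolic cylinders $(s,t]\times B_n$ classical Schauder theory applies, so $(\sigma,x)\mapsto(G_n(t,\sigma)\varphi)(x)$ is smooth for $\varphi\in C_c^2(B_n)$ and satisfies the backward equation; moreover each $G_n(t,s)$ is an integral operator with nonnegative kernel of total mass at most $1$, and, by the construction of $G(t,s)$ in \cite{al11}, $G_n(t,s)\varphi\to G(t,s)\varphi$ locally uniformly as $n\to\infty$ for every $\varphi\in C_b(\CR^d)$.

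For part (1), given $f\in C_c^2(\CR^d)$ I would choose $n$ with $\supp f\subset B_n$ and integrate the backward equation for $G_n$, obtaining
\[
(G_n(t,s_1)f)(x)-(G_n(t,s_0)f)(x)=-\int_{s_0}^{s_1}(G_n(t,\sigma)\A(\sigma)f)(x)\,d\sigma ;
\]
letting $n\to\infty$, the left-hand side converges to $(G(t,s_1)f)(x)-(G(t,s_0)f)(x)$, while $(G_n(t,\sigma)\A(\sigma)f)(x)\to(G(t,\sigma)\A(\sigma)f)(x)$ for each $\sigma$ and $|(G_n(t,\sigma)\A(\sigma)f)(x)|\le\sup_{[0,1]\times\CR^d}|\A(\cdot)f|$ uniformly in $n$ and $\sigma$, so dominated convergence yields \eqref{eq.sderivative1}.

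For part (2), I would first split $f=c\,\one+g$, where $c>0$ is the value of $f$ outside a compact set and $g:=f-c\,\one\in C_c^2(\CR^d)$; part (1) applies to $g$, and since $-\A(\sigma)(c\,\one)=cV(\sigma)\ge0$ the matter reduces to showing that $\sigma\mapsto(G(t,\sigma)V(\sigma))(x)$ is integrable on $[0,t]$ and that $(G(t,s_1)\one)(x)-(G(t,s_0)\one)(x)\ge\int_{s_0}^{s_1}(G(t,\sigma)V(\sigma))(x)\,d\sigma$. For this I would approximate $\one$ from below by functions $h_k\in C_c^2(\CR^d)$ with $0\le h_k\le1$ and $h_k\uparrow\one$, built from a Lyapunov function of Hypothesis \ref{hyp1} (for instance $h_k=\theta(Z_0/k)$ with $\theta$ a fixed profile equal to $1$ near the origin and vanishing near infinity), apply part (1) to each $h_k$, and use $\A(\sigma)h_k=\A_0(\sigma)h_k-V(\sigma)h_k$ to rewrite the resulting identity as
\[
(G(t,s_1)h_k)(x)-(G(t,s_0)h_k)(x)=\int_{s_0}^{s_1}(G(t,\sigma)(V(\sigma)h_k))(x)\,d\sigma-\int_{s_0}^{s_1}(G(t,\sigma)(\A_0(\sigma)h_k))(x)\,d\sigma .
\]
As $k\to\infty$ the left-hand side tends to $(G(t,s_1)\one)(x)-(G(t,s_0)\one)(x)$ and, since $V(\sigma)h_k\uparrow V(\sigma)$ and the kernel of $G(t,\sigma)$ is nonnegative, the first integral on the right increases to $\int_{s_0}^{s_1}(G(t,\sigma)V(\sigma))(x)\,d\sigma$ by monotone convergence; the remaining ``cut-off error'' $\int_{s_0}^{s_1}(G(t,\sigma)(\A_0(\sigma)h_k))(x)\,d\sigma$ is carried by the shrinking region where $h_k$ is not locally constant, which escapes to infinity, and I would show that its upper limit is $\le 0$ with the help of the Lyapunov bounds of Hypothesis \ref{hyp1} (equivalently, after integrating by parts against the kernel, of the integrability and decay of $g(t,\sigma,x,\cdot)$). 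Comparing the three terms then gives simultaneously the finiteness of $\int_{s_0}^{s_1}(G(t,\sigma)V(\sigma))(x)\,d\sigma$ and the required inequality; adding the identity for $g$ completes part (2).

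The step I expect to be the main obstacle is exactly the control of the cut-off error in part (2): because the diffusion and drift coefficients may be unbounded, $\A_0(\sigma)h_k$ need not be small in the sup-norm, so no crude estimate suffices, and it is precisely the Lyapunov structure of Hypothesis \ref{hyp1}, together with the integrability of the Green kernel, that forces this term to vanish --- with the favourable sign --- in the limit. This is also why, for data that are merely constant (rather than vanishing) at infinity, one should expect only the inequality in (2), not an identity.
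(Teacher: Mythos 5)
First, a remark on the comparison itself: the paper does not prove this lemma at all --- it is quoted verbatim from \cite[Lemma 3.4]{al11} --- so your proposal can only be measured against the hypotheses of the present paper and against the technique the paper itself uses in the closely analogous Proposition \ref{p.lyapunov}. Your part (1) is fine: the Dirichlet approximation on balls, the backward equation on bounded cylinders, the uniform bound $|G_n(t,\sigma)\A(\sigma)f|\le\sup|\A(\cdot)f|$ (valid since $f\in C^2_c$ and the $G_n$ are sub-Markovian), and dominated convergence is exactly the standard route. The reduction of part (2) to $f=\one$ via $f=c\,\one+g$ is also correct.

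The gap is in the step you yourself flag as the main obstacle, and it is a real one: with $h_k=\theta(Z_0/k)$ for a generic profile $\theta$, one has
\begin{equation*}
\A_0(\sigma)h_k=\tfrac{1}{k}\,\theta'(Z_0/k)\,\A_0(\sigma)Z_0+\tfrac{1}{k^2}\,\theta''(Z_0/k)\,Q(\sigma)\nabla Z_0\cdot\nabla Z_0 ,
\end{equation*}
and Hypothesis \ref{hyp1} controls neither term. Only the \emph{upper} bound $\A_0(\sigma)Z_0\le M$ is assumed; in all relevant examples $\A_0(\sigma)Z_0\to-\infty$ at infinity, so on the transition annulus $\{k\le Z_0\le 2k\}$ the first term is large and \emph{positive} (since $\theta'\le0$ there), which is precisely the wrong sign for $\limsup_k\int G(t,\sigma)\A_0(\sigma)h_k\,d\sigma\le0$; and the second term is positive wherever $\theta$ is convex, which is unavoidable for a profile decreasing smoothly from $1$ to $0$. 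Tightness of the measures $g(t,\sigma,x,dy)$ and integrability of $Z$ give only that the mass of the annulus is $O(1/k)$, which cannot beat an integrand that Hypothesis \ref{hyp1} allows to grow arbitrarily fast (e.g.\ $\A_0 Z_0\sim-e^{Z_0}$ is not excluded). The repair is to exploit the one-sided Lyapunov bound through concavity, exactly as in the proof of Proposition \ref{p.lyapunov}: take the truncations $\psi_n$ there ($\psi_n(\tau)=\tau$ on $[0,n]$, constant $a_n\ge n$ for $\tau\ge n+1$, $0\le\psi_n'\le1$, $\psi_n''\le0$) and set $h_n:=1-\psi_n(Z_0)/a_n\in C^2_c$, $0\le h_n\le 1$, $h_n\to\one$ pointwise. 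Then $\A_0(\sigma)(\psi_n\circ Z_0)\le\psi_n'(Z_0)\A_0(\sigma)Z_0\le M$ because the second-order term is discarded by concavity and ellipticity, hence the cut-off error satisfies $\frac{1}{a_n}\int_{s_0}^{s_1}G(t,\sigma)\A_0(\sigma)(\psi_n\circ Z_0)\,d\sigma\le M(s_1-s_0)/a_n\to0$ with the favourable sign, and Fatou's lemma applied to $\int G(t,\sigma)[V(\sigma)h_n]\,d\sigma$ then yields both the integrability of $G(t,\cdot)V(\cdot)(x)$ and the inequality. With that substitution your argument closes; as written, it does not.
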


We note that in the case where $V\equiv 0$ part (2) in Lemma \ref{l.sderivative} follows trivially from part (1), since
in that situation $G(t,s)\one \equiv \one$ and $\A(t)\one = 0$ so that equation \eqref{eq.sderivative1} holds
for $f=\one$, cf.\ \cite[Lemma 3.2]{kll10}.

Let us note some consequences of Lemma \ref{l.sderivative} for later use. First of all, part (1) of the lemma implies that
$\partial_sG(t,s)f = -G(t,s)\A (s)f$ for $f \in C^2_c(\CR^d)$. Arguing as in \cite[Lemma 2.2]{klr13}, we see
that for $0\leq a \leq b \leq t$, $x \in \CR^d$ and  $\varphi \in C^{1,2}_c([a,b]\times\CR^d)$, the function $s\mapsto G(t,s)\varphi (s)(x)$ is differentiable in $[a,b]$ and
\[
\partial_sG(t,s)\varphi (s)(x) = G(t,s)\partial_s\varphi (s)(x) - G(t,s)\A (s)\varphi (s)(x).
\]
Consequently, for such a function $\varphi$ we have that
\begin{equation}\label{eq.weak}
\int_a^b G(t,s)\big[\partial_s \varphi (s) -\A (s)\varphi (s)\big](x)\, ds = G(t,b)\varphi (b)(x) - G(t,a)\varphi (a)(x),
\end{equation}
for every $x \in \CR^d$.\medskip

As a consequence of formula \eqref{eq.weak} and \cite[Corollary 3.11]{bkr06} we get the following result.
\begin{lemma}
\label{lem-reg-g}
For any $t\in (0,1]$ and any $x\in\CR^d$ the function $g(t,\cdot,x,\cdot)$ is continuous (actually, locally H\"older continuous) in
$(0,t)\times\CR^d$.
\end{lemma}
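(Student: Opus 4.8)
The plan is to view $g(t,\cdot,x,\cdot)$ as a (weak) solution of the formal adjoint equation in the backward variables $(s,y)$, and then invoke the interior regularity result \cite[Corollary 3.11]{bkr06} for solutions of parabolic equations in divergence-type/measure form with locally H\"older continuous coefficients. Concretely, fix $t \in (0,1]$ and $x \in \CR^d$. For a test function $\varphi \in C^{1,2}_c((0,t)\times\CR^d)$, formula \eqref{eq.weak} gives
\[
\int_0^t \int_{\CR^d} \big[\partial_s\varphi(s,y) - \A(s)\varphi(s,y)\big] g(t,s,x,y)\, dy\, ds = 0,
\]
since the boundary terms $G(t,b)\varphi(b)(x)$ and $G(t,a)\varphi(a)(x)$ vanish for $a$ close to $0$ and $b$ close to $t$ once $\supp\varphi$ is contained in a compact subset of $(0,t)\times\CR^d$. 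This says precisely that the finite measure $g(t,s,x,y)\,dy\,ds$ on $(0,t)\times\CR^d$ is a weak solution of the backward Kolmogorov equation $-\partial_s\mu = \A(s)^*\mu$ associated with $\A$, after the usual sign reinterpretation: writing out $\A(s)^*$ in the form $\sum_{ij} D_{ij}(q_{ij}\,\cdot\,) - \sum_i D_i((\cdots)\,\cdot\,) - V(\cdot)$ puts the equation into the double-divergence form to which \cite[Corollary 3.11]{bkr06} applies.

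The key steps, in order, are: (i) record that, by Hypothesis \ref{hyp1}(1), the coefficients $q_{ij}$ are locally H\"older continuous in space and time (indeed $C^{0,1}$ in space on $(0,1)\times\CR^d$), and $F_i$, $V$ are locally bounded and measurable — this meets the hypotheses of \cite[Corollary 3.11]{bkr06}; (ii) observe from Hypothesis \ref{hyp1}(2) that the principal part is locally uniformly elliptic; (iii) verify that $g(t,\cdot,x,\cdot) \in L^1_\loc((0,t)\times\CR^d)$, which follows from $g(t,\cdot,x,\cdot)\in L^1((0,1)\times\CR^d)$ as recalled at the start of Section 2; (iv) apply \cite[Corollary 3.11]{bkr06} on relatively compact open subsets $(a,b)\times B_R \subset (0,t)\times\CR^d$ to conclude that $g(t,\cdot,x,\cdot)$ admits a version that is locally H\"older continuous there; (v) let $a\downarrow 0$, $b\uparrow t$, $R\uparrow\infty$ and use uniqueness of the continuous version to patch these together into local H\"older continuity on all of $(0,t)\times\CR^d$.

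The main obstacle — really the only non-bookkeeping point — is step (i) together with checking that \eqref{eq.weak} genuinely certifies $g(t,\cdot,x,\cdot)$ as a solution in the precise sense demanded by \cite[Corollary 3.11]{bkr06}. One must be careful that the class of test functions in \eqref{eq.weak}, namely $\varphi \in C^{1,2}_c((0,t)\times\CR^d)$, is rich enough (it is: it is dense in the relevant test space, and the measure is finite so no growth issues arise), and that the drift and potential terms, though only measurable and locally bounded rather than H\"older, fall within the scope of the cited result, which is designed exactly for low-regularity lower-order coefficients. Once the identification is made, local parabolic regularity for such equations is standard and the H\"older continuity — rather than mere continuity — comes for free from the same citation, which is why the statement is phrased with the parenthetical strengthening.
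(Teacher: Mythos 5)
Your proposal is correct and follows exactly the route the paper takes: the paper's entire ``proof'' is the single remark that the lemma follows from formula \eqref{eq.weak} together with \cite[Corollary 3.11]{bkr06}, i.e.\ precisely your identification of $g(t,s,x,y)\,dy\,ds$ as a weak solution of the adjoint (double-divergence form) equation followed by the interior H\"older regularity result of Bogachev--Krylov--R\"ockner. Your write-up simply fills in the verification details that the paper leaves implicit.
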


We now introduce time dependent Lyapunov functions.

\begin{definition}
\label{d.lyap}
Let $t \in (0,1]$. A \emph{time dependent Lyapunov function $($on $[0,t])$} is a function
$0 \leq W \in C([0,t]\times\CR^d)\cap C^{1,2}((0,t)\times\CR^d)$ such that
\begin{enumerate}
\item
$W(s,x) \leq Z(x)$ for all $(s,x) \in [0,t]\times \CR^d$;
\item
$\lim_{|x|\to \infty}W(s,x) = \infty$, uniformly for $s$ in compact subsets of $[0,t)$;
\item there exists a function $0\leq h \in L^1((0,t))$ such that
\begin{equation}
\partial_s W(s,x) - \A(s)W(s) \geq -h(s)W(s)
\label{star}
\end{equation}
and
\begin{equation}
\partial_sW(s) - (\eta \Delta W(s)+F(s)\cdot\nabla_x W(s)-V(s)W(s)) \geq - h(s)W(s)\,,
\label{star-star}
\end{equation}
on $\CR^d$, for every $s \in (0,t)$.
\end{enumerate}
Sometimes, we will say that $W$ is a time dependent Lyapunov function \emph{with respect to $h$} to emphasize the dependence on $h$.
\end{definition}

\begin{proposition}\label{p.lyapunov}
Let $W$ be a time dependent Lyapunov function on $[0,t]$ with respect to $h$. Then for $0\leq s \leq t$ and $x \in \CR^d$
the function $W(s)$ is integrable with respect to the measure $g_{t,s}(x,dy)$. Moreover, setting
\[
\zeta_W(s,x) := \int_{\CR^d} W(s,y)g_{t,s}(x, dy)
\]
we have
\begin{equation}\label{eq.zetaest}
\zeta_W(s,x) \leq e^{\int_s^th(\tau)\, d\tau} W(t,x).
\end{equation}
\end{proposition}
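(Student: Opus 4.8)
The plan is to make rigorous the following formal computation: writing $\zeta_W(s,x)=(G(t,s)W(s,\cdot))(x)$ and differentiating in $s$, one gets from \eqref{star} and the positivity of $G(t,s)$ that $\partial_s\zeta_W(s,x)=\big(G(t,s)[\partial_sW(s)-\A(s)W(s)]\big)(x)\ge -h(s)\zeta_W(s,x)$, so that Gronwall's lemma together with $\zeta_W(t,x)=W(t,x)$ gives \eqref{eq.zetaest}. Since a priori we do not even know that $W(s)$ is $g_{t,s}(x,\cdot)$-integrable, I would carry this out on an exhaustion of $\CR^d$ by balls, which simultaneously produces the integrability. For $n\in\CN$ put $B_n:=\{|x|<n\}$ and let $g_n(\tau,s,\cdot,\cdot)$ be the (classical, positive) Dirichlet Green kernel on $B_n$ for the operator $\A(\cdot)$, extended by zero outside $B_n$. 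By the construction of $g$ recalled in \cite[Proposition 3.1]{al11} (see also \cite{klr13}) one has $0\le g_n(t,s,x,y)\uparrow g(t,s,x,y)$ as $n\to\infty$ for all $0\le s<t\le1$ and all $x,y$. As $W(s)\ge0$, by monotone convergence it then suffices to show, for each $n$ and each $x\in B_n$ (the case $s=t$ being trivial, assume $s<t$),
\[
v_n(t,x):=\int_{B_n}W(s,y)\,g_n(t,s,x,y)\,dy\ \le\ e^{\int_s^t h(\tau)\,d\tau}\,W(t,x),
\]
and then let $n\to\infty$; the resulting bound is exactly \eqref{eq.zetaest} and in particular shows $W(s)\in L^1(g_{t,s}(x,\cdot))$.

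Fix $n$. By definition of the Dirichlet Green kernel, $v_n(\tau,x):=\int_{B_n}W(s,y)g_n(\tau,s,x,y)\,dy$ is, for $\tau\in[s,t]$ and $x\in B_n$, the bounded solution of $\partial_\tau v_n=\A(\tau)v_n$ in $(s,t)\times B_n$, $v_n(s,\cdot)=W(s,\cdot)|_{B_n}$, $v_n=0$ on $(s,t]\times\partial B_n$. Consider $\rho(\tau,y):=e^{\int_s^\tau h(\sigma)\,d\sigma}\,W(\tau,y)$. Since $W\in C([s,t]\times\overline{B_n})\cap C^{1,2}((s,t)\times B_n)$ (note $(s,t)\subseteq(0,t)$) and $\tau\mapsto\int_s^\tau h$ is absolutely continuous, for a.e.\ $\tau$ one has, using \eqref{star},
\[
\partial_\tau\rho-\A(\tau)\rho=e^{\int_s^\tau h}\big[h(\tau)W(\tau)+\partial_\tau W(\tau)-\A(\tau)W(\tau)\big]\ge e^{\int_s^\tau h}\big[h(\tau)W(\tau)-h(\tau)W(\tau)\big]=0,
\]
so $\rho$ is a supersolution. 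Moreover $\rho\ge0$, $\rho(s,\cdot)=W(s,\cdot)=v_n(s,\cdot)$ on $\overline{B_n}$, and $v_n=0\le\rho$ on $(s,t]\times\partial B_n$; thus $v_n\le\rho$ on the parabolic boundary of $(s,t)\times B_n$ — with the caveat that $v_n$ need not be continuous at the initial corner $\{s\}\times\partial B_n$, where one controls $\limsup v_n$ by comparing $v_n$ with the solution of the same equation on $B_n$ carrying the time-independent boundary datum $W(s,\cdot)|_{\partial B_n}$ (continuous up to that corner, with value $W(s,\cdot)=\rho(s,\cdot)$), or, equivalently, by first working with $W(s,\cdot)\chi_k$ for cutoffs $\chi_k\uparrow1$ vanishing near $\partial B_n$ and letting $k\to\infty$ at the end.

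Now $d:=v_n-\rho$ satisfies $\partial_\tau d-\A(\tau)d\le0$ in $(s,t)\times B_n$, i.e.\ $\partial_\tau d-\sum_{i,j}q_{ij}D_{ij}d-F\cdot\nabla d+Vd\le0$ with $V\ge0$ by Hypothesis \ref{hyp1}(3), and $d\le0$ on the parabolic boundary, while the coefficients of $\A(\tau)$ are bounded and uniformly elliptic on the compact cylinder $[s,t]\times\overline{B_n}$. The parabolic weak maximum principle (which accommodates the supersolution inequality holding for a.e.\ $\tau$ only) then yields $d\le0$ on $[s,t]\times\overline{B_n}$; at $(t,x)$ this reads $v_n(t,x)\le\rho(t,x)=e^{\int_s^t h}W(t,x)$, as needed. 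The main obstacle is not this comparison but the two preliminary facts it rests on: first, the monotone approximation $g_n\uparrow g$, which is exactly where the full force of Hypothesis \ref{hyp1} enters (via \cite{al11}, guaranteeing well-posedness of the Cauchy problem on $\CR^d$ and that its kernel is the increasing limit of the Dirichlet kernels); and second, the corner technicality just mentioned, which is the one point where a little care is genuinely required. Note that neither conditions (1)--(2) of Definition \ref{d.lyap} nor inequality \eqref{star-star} is used here; only \eqref{star} and $W\ge0$ enter.
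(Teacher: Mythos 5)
Your argument is correct in outline, but it is a genuinely different route from the paper's. The paper never localizes in space: it truncates $W$ in its \emph{range}, setting $W_n=\psi_n\circ W$ with concave cutoffs $\psi_n$, so that $W_n$ is a compactly supported $C^{1,2}$ function plus a constant; it then feeds $W_n$ into the identity \eqref{eq.weak} together with Lemma \ref{l.sderivative}(2) (needed because $G(t,s)\one\neq\one$ when $V\not\equiv 0$), lets $r\uparrow t$ using tightness of $\{g_{t,s}(x,dy)\}$, lets $n\to\infty$ by monotone convergence, and closes with a Gronwall argument on the \emph{integrated} inequality $\zeta_W(t,x)-\zeta_W(s,x)\ge-\int_s^t h\,\zeta_W$ via the auxiliary function $\Phi$. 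Integrability of $W(s)$ is obtained beforehand by domination, $W\le Z$ and \cite[Proposition 4.7]{al11}. Your space-truncation plus comparison-principle argument buys something real: integrability falls out of the monotone limit rather than being assumed via $W\le Z$, and indeed conditions (1)--(2) of Definition \ref{d.lyap} and \eqref{star-star} are not used, so you prove a marginally stronger statement. The price is two load-bearing facts the paper deliberately avoids. First, $g_n\uparrow g$: this is how $g$ is constructed in \cite{al11} (minimal solution as increasing limit of Cauchy--Dirichlet solutions, using uniqueness in $C_b$ from the Lyapunov function), but it is an appeal to the construction rather than to a stated result, and you should say so explicitly. Second, and more delicately, your supersolution $\rho=e^{\int_s^\cdot h}W$ is only absolutely continuous in time (since $h\in L^1$ may be unbounded, you cannot replace it by a continuous majorant with comparable integral), so the classical pointwise parabolic maximum principle does not apply directly; you need an energy/Steklov-average version on the cylinder, or an integrated Gronwall argument --- which is essentially what the paper's $\Phi$-trick accomplishes without ever differentiating $e^{\int_s^\tau h}$. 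Neither point is fatal, but both deserve a sentence of justification rather than a parenthesis; the corner discontinuity at $\{s\}\times\partial B_n$ you already handle adequately with cutoffs.
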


\begin{proof}
Let us first note that by \cite[Proposition 4.7]{al11} the function $Z$ is integrable with respect to $g_{t,s}(x,dy)$.
Moreover,
\begin{equation}\label{eq.zest}
G(t,s)Z(x) := \int_{\CR^d} Z(y)g_{t,s}(x, dy)\leq Z(x) + M(t-s).
\end{equation}
It thus follows immediately from domination that $W(s)$ is integrable with respect to $g_{t,s}(x,dy)$.\medskip

We now fix a sequence of functions $\psi_n \in C^\infty([0,\infty))$ such that
\begin{enumerate}[(i)]
\item $\psi_n (\tau ) = \tau$ for $\tau\in [0,n]$;
\item $\psi_n(\tau) \equiv \mathrm{const.}$ for $\tau \geq n+1$;
\item $0\leq \psi_n'\leq 1$ and $\psi_n''\leq 0$.
\end{enumerate}
Let us also fix $0\le s<r<t$. Note that, for any $n\in\CN$, the function
$W_n := \psi_n\circ W$ is the sum of a function in $C_c^{1,2}([0,r]\times\CR^d)$ and a positive constant. Indeed, $W(s,\sigma) \to \infty$ as $|x| \to \infty$ uniformly on $[0,r]$.
For a positive constant function, we have by Lemma \ref{l.sderivative}(2) that
\[
G(t,r)\one - G(t,s)\one \geq - \int_s^rG(t,\sigma ) \A(\sigma)\one\, d\sigma =
\int_s^r G(t,\sigma)\big[\partial_s\one -\A(\sigma)\one\big]\, d\sigma.
\]
Combining this with Equation \eqref{eq.weak}, it follows that
\begin{align}
 &\quad G(t,r)W_n(r)(x) - G(t,s)W_n(s)(x)\notag\\
   \geq & \,\int_s^r G(t,\sigma)\big[\partial_{\sigma} W_n(\sigma) -\A(\sigma )W_n (\sigma)\big](x)\, d\sigma\notag\\
   = & \,\int_s^r G(t,\sigma ) \big[\psi_n'(W(\sigma))\big(\partial_\sigma W(\sigma) - \A(\sigma )W(\sigma)\big)\big](x)\, d\sigma\notag\\
   & \quad - \int_s^r G(t,\sigma)\big[ V(\sigma)W(\sigma)\psi_n'(W(\sigma))-V(\sigma)\psi_n(W(\sigma))\big](x)\, d\sigma\notag\\
   &\quad - \int_s^r G(t,\sigma) \big[ \psi_n''(W(\sigma))\big(Q(\sigma )\nabla_x W(\sigma)\cdot\nabla_x W(\sigma)\big)\big](x)\, d\sigma\notag\\
   \geq & \,-\int_s^r G(t,\sigma)\big[\psi_n'(W(\sigma))h(\sigma) W(\sigma)\big](x)\, d\sigma\label{eq.wnest},
\end{align}
for any $x\in\CR^d$,
since $G(t,s)$ preserves positivity and the condition $\psi_n''\le 0$ implies that $y\psi_n'(y)-\psi_n(y)\le 0$ for any $y\ge 0$.

We next want to let $r \uparrow t$. We fix an increasing sequence $(r_k)\subset (s,t)$, converging to $t$
as $k\to\infty$. By monotone convergence, we clearly have
\[
\int_s^{r_k}G(t,\sigma)\big[h(\sigma)W_n(\sigma)\big](x)\, d\sigma \to \int_s^{t}G(t,\sigma)\big[h(\sigma)W_n(\sigma)\big](x)\, d\sigma
\]
as $k\to\infty$. We now claim that $G(t,r_k)W_n(r_k)(x) \to G(t,t)W_n(t)(x) = W_n(t,x)$ as $k\to \infty$. To see this,
we note that for $f \in C_b(\CR^d)$, the function $(s,x) \mapsto G(t,s)f(x)$ is continuous in $[0,t]\times \CR^d$
as a consequence of \cite[Theorem 4.11]{al11}. This immediately implies that
$G(t,r_k)W_n(t)(x) \to G(t,t)W_n(t)(x) = W_n(t,x)$ as $k\to \infty$.
Moreover, from \eqref{eq.zest} it follows that
\begin{equation}
g_{t,s}(\CR^d\setminus B(0,R))\le \frac{1}{\inf_{\CR^d\setminus B(0,R)}Z}\int_{\CR^d}Z(y)g_{t,s}(x,dy)
\le \frac{Z(x)+M}{\inf_{\CR^d\setminus B(0,R)}Z},
\label{tight}
\end{equation}
where $B(0,R)\subset\CR^d$ denotes the open ball centered at $0$ with radius $R$,
and the right-hand side of \eqref{tight} converges to zero as $R\to\infty$. Hence,
the set of measures $\{g_{t,s}(x,dy) : s \in [0,t]\}$ is tight.

Taking into account that $W_n(r_k)$ is uniformly bounded and converges locally uniformly to $W_n(t)$ as $k\to \infty$, it is easy to see that
\[
G(t,r_k)W_n(r_k)(x) - G(t, r_k)W_n(t)(x) = \int_{\CR^d} (W_n(r_k,y) - W_n(t,y)) \, g_{t,r_k}(x,dy) \to 0
\]
as $k\to \infty$. Combining these two facts, it follows that $G(t,r_k)W_n(r_k)(x) \to W_n(t,x)$ as claimed.

Thus, letting $r\uparrow t$ in \eqref{eq.wnest}, we find that
\begin{equation}\label{eq.wnestt}
W_n(t,x) - G(t,s)W_n(s)(x) \geq - \int_s^t G(t,\sigma)\big[\psi_n'(W(\sigma)) h(\sigma) W(\sigma)\big](x)\, d\sigma.
\end{equation}
Note that $\psi_n'(W(\sigma)) h(\sigma) W(\sigma)$ and $W_n(s)$ converge increasingly to $W(\sigma)h(\sigma)$ and $W(s)$, respectively, as $n\to \infty$, for any $\sigma\in [s,t]$.
Moreover, \eqref{eq.zest} implies that $G(t,\sigma)W(\sigma)\in (0,\infty)$. Since each operator $G(t,\sigma)$ preserves positivity, we can use monotone convergence to let $n\to \infty$ in \eqref{eq.wnestt}, obtaining
\[
W(t,x) - G(t,s)W(s)(x) \geq - \int_s^th(\sigma)G(t,\sigma)W(\sigma)(x)\, d\sigma.
\]
Equivalently,
\begin{equation}\label{eq2-7}
\zeta_W(t,x) - \zeta_W(s,x) \geq -\int_s^th(\sigma)\zeta_W(\sigma,x)\, d\sigma,\qquad\;\,x\in\CR^d.
\end{equation}
This inequality yields \eqref{eq.zetaest}. Indeed, the function $\Phi$, defined by
\begin{eqnarray*}
\Phi (\tau) := \bigg( \zeta_W(t,x) + \int_\tau^t h(\sigma) \zeta_W (\sigma,x)\, d\sigma \bigg) e^{\int_s^\tau h(\sigma)\, d\sigma}\,,
\end{eqnarray*}
is continuous on $[s,t]$ and increasing since its weak derivative is nonnegative by \eqref{eq2-7}. Hence $\Phi(s)\le\Phi(t)$, from which \eqref{eq.zetaest} follows at once if we take again \eqref{eq2-7}
into account.
\qed
\end{proof}

Let us illustrate this in the situation of Theorem \ref{t.example}.

\begin{proposition}\label{p.example}
Consider the (time independent) operator $\A(t) \equiv \A$, defined by
\[
\A\varphi (x) = (1+|x|_{*}^m)\Delta\varphi (x) - |x|^{p-1}x\cdot \nabla \varphi (x) - |x|^{r}\varphi(x),
\]
where $m, r \geq 0$ and $p> 1$. Moreover, assume one of the following situations:
\begin{enumerate}[(i)]
\item $p>m-1$, $\beta := p+1-m$ and $\delta < 1/\beta$;
\item $r > m-2$, $\beta := \half (r+2-m)$ and $\delta < 1/\beta$.
\end{enumerate}
Then the following properties hold true:
\begin{enumerate}
\item the function $Z(x) := \exp (\delta |x|_{*}^\beta)$ satisfies Part (4) of Hypothesis \ref{hyp1};
\item for $0<\eps < \delta$ and $\alpha>\alpha_0$,
the function $W(s,x) := \exp (\eps (t-s)^\alpha |x|_{*}^\beta)$ is
a time dependent Lyapunov function in the sense of Definition \ref{d.lyap}. Here,
$\alpha_0 = \frac{\beta}{p-1}$ if we assume condition (ii) and additionally $m+r \leq 2$. In all other cases,
$\alpha_0=\frac{\beta}{m+\beta-2}$.
\end{enumerate}
\end{proposition}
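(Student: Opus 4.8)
The plan is to verify, by direct computation, all the conditions of Hypothesis~\ref{hyp1}(4) for $Z$ and of Definition~\ref{d.lyap} for $W$. Since every coefficient as well as $Z$ and $W(s,\cdot)$ are smooth and bounded on the ball $\{|x|\le1\}$, only the region $\{|x|\ge1\}$ (where $|x|_*^\beta=|x|^\beta$) needs genuine work; on the ball everything is bounded, so the inequalities hold with a right-hand side that is trivially integrable in time. On $\{|x|\ge1\}$ write $Z=e^{\delta|x|^\beta}$ and $W=e^{\phi}$ with $\phi(s,x)=\eps(t-s)^\alpha|x|^\beta$, and use $\nabla_xW=W\nabla_x\phi$, $\Delta_xW=W(\Delta_x\phi+|\nabla_x\phi|^2)$, $\partial_sW=W\partial_s\phi$ together with the identities $x\cdot\nabla_x(|x|^\beta)=\beta|x|^\beta$, $\Delta_x(|x|^\beta)=\beta(\beta+d-2)|x|^{\beta-2}$, $|\nabla_x(|x|^\beta)|^2=\beta^2|x|^{2\beta-2}$. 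After dividing by the positive factor $W$ (resp.\ $Z$), each required inequality becomes an elementary inequality between explicit powers of $|x|$, with coefficients that are powers of $t-s$.

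For Part~(1), inserting $Z$ into $\A Z$ gives, for $|x|\ge1$,
\[
\A Z=Z\Big[(1+|x|^m)\big(\delta^2\beta^2|x|^{2\beta-2}+\delta\beta(\beta+d-2)|x|^{\beta-2}\big)-\delta\beta|x|^{p-1+\beta}-|x|^r\Big],
\]
while $\eta\Delta_xZ+F\cdot\nabla_xZ-VZ$ is the same with $1+|x|^m$ replaced by $\eta$. In case~(i), $\beta=p+1-m$ gives $m+2\beta-2=p-1+\beta$, so the bracket is asymptotic, as $|x|\to\infty$, to $\delta\beta(\delta\beta-1)|x|^{p-1+\beta}$, which is negative since $\delta\beta<1$; in case~(ii), $\beta=\half(r+2-m)$ gives $m+2\beta-2=r$, so the bracket is asymptotic to $(\delta^2\beta^2-1)|x|^r$, again negative. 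In either case the bracket tends to $-\infty$ as $|x|\to\infty$ and is bounded on $\{|x|\le1\}$; hence $\A Z\le M$ and $\eta\Delta_xZ+F\cdot\nabla_xZ-VZ\le M$ for a suitable $M\ge0$, which is Hypothesis~\ref{hyp1}(4).

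For Part~(2), the conditions $0\le W\in C\cap C^{1,2}$, $W(s,\cdot)\le Z$ (which amounts to $\eps(t-s)^\alpha\le\delta$, true because $t-s\le1$ and $\eps<\delta$) and $\lim_{|x|\to\infty}W(s,x)=\infty$ locally uniformly in $s\in[0,t)$ are immediate. The substance is to exhibit $0\le h\in L^1((0,t))$ with \eqref{star} and \eqref{star-star}. Dividing \eqref{star} by $W$, one has to bound from below
\[
\mathcal E(s,x):=\partial_s\phi-(1+|x|^m)\big(\Delta_x\phi+|\nabla_x\phi|^2\big)+|x|^{p-1}x\cdot\nabla_x\phi+|x|^r
\]
by $-h(s)$ (and similarly with $1+|x|^m$ replaced by $\eta$ for \eqref{star-star}). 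Written out, $\mathcal E$ has two nonnegative ``good'' contributions, the drift term $\eps\beta(t-s)^\alpha|x|^{p-1+\beta}$ and the potential term $|x|^r$, against three ``bad'' ones: the curvature term $-(1+|x|^m)\eps\beta(\beta+d-2)(t-s)^\alpha|x|^{\beta-2}$ (of strictly lower order in $|x|$ than the dominating good term, and bounded in $t-s$), the gradient-square term $-(1+|x|^m)\eps^2\beta^2(t-s)^{2\alpha}|x|^{2\beta-2}$, and---most dangerously---the time-derivative term $\partial_s\phi=-\alpha\eps(t-s)^{\alpha-1}|x|^\beta$, which is singular as $s\uparrow t$ when $\alpha<1$. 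Because $\eps\beta<1$ and $t-s\le1$, the leading part of the gradient-square term (of order $|x|^{m+2\beta-2}$, equal to $|x|^{p-1+\beta}$ in case~(i) and to $|x|^r$ in case~(ii)) is absorbed into a fixed fraction of the drift term (case~(i)) or of the potential term (case~(ii)), leaving a strictly positive remainder of the same order. One then splits $\{|x|\ge1\}$ according to whether that remainder already dominates $-\partial_s\phi$ (there $\mathcal E\ge0$) or not; on the complementary region $|x|$ is bounded by a negative power of $t-s$, so $-\partial_s\phi\le c(t-s)^{\alpha-1-\kappa}$ with $\kappa>0$ fixed by the spatial gap between the exponent $\beta$ of $\partial_s\phi$ and the exponent of the controlling good term (and by whether that good term carries a factor $(t-s)^\alpha$). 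Requiring $(t-s)^{\alpha-1-\kappa}\in L^1((0,t))$, i.e.\ $\alpha>\kappa$, is exactly $\alpha>\alpha_0$: using (a truncation at order $|x|^r$ of) the drift term gives $\alpha_0=\frac{\beta}{m+\beta-2}$, which serves in case~(i) and in case~(ii) with $m+r>2$, whereas when $m+r\le2$ the potential no longer dominates $|x|^\beta$, the full drift must be used, and $\alpha_0=\frac{\beta}{p-1}$. Finally, the curvature terms are absorbed by the same positive remainder for $|x|$ beyond a fixed radius, uniformly in $t-s$, and \eqref{star-star} is treated identically---in fact more easily, because replacing $1+|x|^m$ by the constant $\eta$ only lowers the order of the diffusion contributions.

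The step I expect to be the main obstacle is the behaviour near the final time $s=t$: when $\alpha<1$ the factor $(t-s)^{\alpha-1}$ in $\partial_sW$ is singular and can only be controlled because the spatial growth exponent $\beta$ of $W$ is strictly below the growth exponent of the dominating good term; performing the domain splitting, extracting the residual exponent $\kappa$, and doing so \emph{uniformly} in $t-s\in(0,1]$ (so that the resulting $h$ is a fixed power of $t-s$, hence in $L^1((0,t))$) is what pins down $\alpha_0$ and forces the case distinction according to the sign of $m+r-2$. All the rest is bookkeeping---deciding which of the finitely many explicit power terms dominates in each regime and checking that every absorption is uniform in the time variable.
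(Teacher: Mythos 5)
Your proposal is correct and follows essentially the same route as the paper: direct computation of $W^{-1}(\partial_sW-\A W)$, absorption of the gradient-square term using $\eps\beta(t-s)^\alpha<1$, a splitting of $\{|x|\ge1\}$ at a radius of order $(t-s)^{-1/(m+\beta-2)}$ (respectively $(t-s)^{-1/(p-1)}$ when $m+\beta-2\le0$) to control the singular term $\partial_s\phi$, yielding $h(s)\sim(t-s)^{\alpha-1-\alpha_0}\in L^1$, and finally deducing \eqref{star-star} from $\eta\Delta W+F\cdot\nabla W-VW\le\A W$. The only cosmetic difference is that the paper extracts the positive term that beats $\partial_sW$ from the surplus $(\delta-\eps)\beta^2(t-s)^\alpha(1+|x|^m)|x|^{2\beta-2}$ created by replacing $\eps(t-s)^\alpha$ with $\delta$ in the gradient-square term, whereas you keep a leftover fraction of the drift (case (i)) or potential (case (ii)); both choices give the same exponent $\alpha_0$.
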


\begin{proof}
In the computations below, we assume that $|x| \geq 1$ so that $|x|_*^s = |x|^s$ for $s\ge 0$. At the cost of
slightly larger constants, these estimates can be extended to all of $\CR^d$. We omit the details which can be obtained
as in the proof of \cite[Lemma 5.2]{klr13}

(1) By direct computations, we see that
\[
\A Z(x) = \delta \beta \Big[ (1+|x|^m)|x|^{\beta-2} \big(d+ \beta-2 + \delta \beta |x|^\beta\big)
- |x|^{p-1+\beta}-|x|^r\Big]Z(x).
\]
The highest power of $|x|$ appearing in the first term is $|x|^{m+2\beta - 2}$ which, in case (i) is
exactly $|x|^{p-1+\beta}$, in case (ii) it is exactly $|x|^r$. In both cases, the highest power in the square brackets has a negative coefficient in front, namely $\delta\beta -1$.
Thus $\lim_{|x|\to \infty}\A Z(x) = - \infty$. It now follows from the continuity
of $\A Z$ that $\A Z \leq M$ for a suitable constant $M$. Since $\eta \Delta Z+F\cdot\nabla Z-VZ\le\A Z$,
we conclude that the function $\eta\Delta Z+F\cdot\nabla Z-VZ$ is bounded from above as well.

(2) We note that since $\eps < \delta$, we have $W(s,x) \leq (Z(x))^\frac{\eps}{\delta} \leq Z(x)$ for all $s \in [0,t]$
and $x \in \CR^d$ so that (1) in Definition \ref{d.lyap} is satisfied. Condition (2) is immediate from the definition of $W$ so
that it only remains to verify condition (3).

A computation shows that
\begin{align}
&\quad \partial_sW(s,x) - \A W(s,x)\notag\\
  = &\, -\eps \alpha (t-s)^{\alpha-1}|x|^\beta W(s,x) - \eps \beta (t-s)^\alpha W(s,x)\times\label{final-est-1}\\
& \quad \times \Big[
(1+|x|^m)|x|^{\beta-2}\big( d + \beta-2 + \eps \beta (t-s)^\alpha |x|^\beta\big) - |x|^{p-1+\beta}\Big ] + |x|^rW(s,x)\notag\\
 \geq &  \,-\eps \alpha (t-s)^{\alpha-1}|x|^\beta W(s,x) - \eps \beta (t-s)^\alpha W(s,x)\times\notag\\
& \; \times \Big[
(1+|x|^m)|x|^{\beta-2}\big( d +\beta-2 + \delta \beta  |x|^\beta\big) - |x|^{p-1+\beta}\Big ] + |x|^rW(s,x)\notag\\
& \; + \eps\beta^2(\delta-\eps) (t-s)^\alpha (1+|x|^m)|x|^{2\beta-2}W(s,x)\notag\\
\geq & \,\eps (t-s)^{\alpha-1}|x|^\beta\big( (\delta -\eps) \beta^2(t-s) |x|^{m+\beta-2} - \alpha \big) W(s,x)\notag\\
&\; -\eps \beta (t-s)^\alpha W(s,x)\Big[ (1+|x|^m)|x|^{\beta-2}\big(d+\beta-2+\delta\beta |x|^{\beta}\big) -|x|^{p-1+\beta}-|x|^r\Big ],
 \label{final-est}
\end{align}
where in the last inequality we took into account that $\varepsilon\beta(t-s)^{\alpha}<1$.\medskip

To further estimate $\partial_sW(s) - \A W(s)$, we first assume that $\beta+m-2\ge 0$. This condition is satisfied under condition (i) and also under condition (ii) provided that $m+r> 2$.
We set $C:= \big[(\delta-\eps)\beta^2/\alpha\big]^{-\frac{1}{\beta+m-2}}$ and distinguish two cases.
\smallskip

{\it Case 1:} $|x| \geq C(t-s)^{-\frac{1}{\beta+m-2}}$.

In this case $(\delta -\eps) \beta^2(t-s)|x|^{\beta+m-2} \geq \alpha$ so that the first summand in \eqref{final-est} is
nonnegative. Replacing $C$ with a larger constant if necessary, we can -- as in the proof of part (1) -- ensure that
also the second summand is positive so that overall $\partial_sW(s) -\A W(s) \geq 0$ in this case.\smallskip

{\it Case 2:} $1\leq |x| < C(t-s)^{-\frac{1}{\beta+m-2}}$.

In this case, we start again from Estimate \eqref{final-est-1}. We drop the terms involving $-|x|^{p-1+\beta}$ and $|x|^r$
and, using that $|x|\geq 1$, estimate further as follows:
\begin{align*}
&\quad W(s,x)^{-1}(\partial_sW(s,x) -\A W(s,x))\\
 \geq& \,  -\eps\alpha (t-s)^{\alpha-1}|x|^\beta - 2\eps\beta(t-s)^\alpha
|x|^{m+\beta-2}(d+\beta-2 + \eps\beta |x|^\beta)\\
\geq &\, - \eps\alpha (t-s)^{\alpha-1}C^\beta (t-s)^{-\frac{\beta}{m+\beta-2}}
-2\eps\beta (t-s)^{\alpha}C^{m+\beta-2}(t-s)^{-1} \times\\
&\quad\quad\times \big(d+\beta-2 + \eps\beta C^\beta
(t-s)^{-\frac{\beta}{m+\beta-2}}\big)\\
\geq&\, -\tilde{C}(t-s)^{\alpha -1 - \frac{\beta}{m+\beta-2}}=: -h(s).
\end{align*}
Note that $h\in L^1(0,t)$ since $\alpha-1 -\frac{\beta}{m+\beta-2} > -1$ by assumption.\smallskip

Suppose now that $m+\beta-2\le 0$, so that $|x|^{m+\beta-2} \leq 1$ for $|x|\geq 1$. Taking again into account that $\eps\beta (t-s)^\alpha <1$ and dropping the term involving $|x|^r$, we derive from \eqref{final-est-1} that
\begin{align*}
&\quad W(s,x)^{-1}(\partial_sW(s,x) - \A W(s,x))\notag\\
  \geq &\, -\eps (t-s)^{\alpha-1}|x|^\beta\big (\alpha  + 2\beta-  \beta (t-s)|x|^{p-1}\big )
  -2(d+\beta-2),
\end{align*}
for any $|x|\ge 1$.
We can now argue as above taking $C=\big [(\alpha+2\beta)/\beta\big ]^{\frac{1}{p-1}}$
and distinguishing the cases $|x|\ge C(t-s)^{-\frac{1}{p-1}}$ and $1\le |x|< C(t-s)^{-\frac{1}{p-1}}$.
We conclude that
\[
W(s,x)^{-1}(\partial_sW(s,x) - \A W(s,x))\ge -\varepsilon C^{\beta}(\alpha+2\beta)(t-s)^{\alpha-1-\frac{\beta}{p-1}}
=:-h(s),
\]
for any $s\in (0,t)$, $|x|\ge 1$, and $h\in L^1((0,t))$ due to the condition on $\alpha$.

We have thus proved \eqref{star} in Definition \ref{d.lyap}. The analogous estimate \eqref{star-star}
for $\eta \Delta_{x} + F\cdot \nabla_{x} - c$ follows from observing that
$\eta \Delta_xW + F\cdot \nabla_x W - cW \leq \A W$.
\qed
\end{proof}

\section{Kernel bounds in the case of bounded diffusion coefficients}
\label{sect-3}
Throughout this section, we set $Q(a,b):=(a,b)\times\CR^d$ and $\overline{Q}(a,b):=[a,b]\times\CR^d$
for any $0\le a<b\le 1$.
Moreover, we assume that the coefficients $q_{ij}$ and their spatial derivatives $D_kq_{ij}$ are bounded on $Q(0,b)$
for $i,j,k=1, \ldots, d$ and every $b<1$.
We will remove this additional boundedness assumption in the next section.

Fix now $t \in [0,1]$. For $0\leq a < b \leq t$, $x \in \CR^d$ and $k\geq 1$, we define the quantities
$\Gamma_j(k,x,a,b)$ for $j=1,2$ by
\[
\Gamma_1(k,x,a,b) := \bigg(\int_{Q(a,b)} |F(s,y)|^k g(t,s,x,y)\, ds\, dy \bigg)^\frac{1}{k},
\]
where $g$ is the Green kernel associated with $\A$,
and
\[
\Gamma_2(k,x,a,b) := \bigg(\int_{Q(a,b)} |V(s,y)|^k g(t,s,x,y)\, ds\, dy \bigg)^\frac{1}{k}.
\]

We also make an additional assumption about the parabolic equation governed by the operators $\A_0$ without potential term.
Hypothesis \ref{hyp1}(5) guarantees that the Cauchy problem \eqref{eq.nee} with $\A$ being replaced by $\A_0$
admits a unique solution $u\in C_b(\overline Q(s,1))\cap C^{1,2}(Q(s,1))$ for any $f\in C_b(\CR^d)$. The associated evolution operator
admits a Green kernel which we denote by $g_0$. In the following lemma, we will deal with the space
$\mathscr{H}^{p,1}(Q(a,b))$ of all functions in $W^{0,1}_p(Q(a,b))$ with distributional time derivative in $(W^{0,1}_{p'}(Q(a,b)))'$,
where $1/p+1/p'=1$. We refer the reader to \cite{krylov01,mpr10} for more details on these spaces. Here, we just prove the following result which is crucial in the proof of Theorem \ref{t.mainbdd} (cf. \cite[Lemma 7.2]{mpr10}).

\begin{lemma}
\label{lem-cortona-0}
Let $u\in {\mathscr H}^{p,1}(Q(a,b))\cap C_b(\overline Q(a,b))$ for some $p\in (1,\infty)$. Then, there exists a sequence $(u_n)\subset C^{\infty}_c(\CR^{d+1})$ of smooth functions such that $u_n$ tends to $u$ in $W^{0,1}_p(Q(a,b))$ and locally uniformly in $\overline Q(a,b)$, and $\partial_tu_n$ converges to $\partial_tu$ weakly$^*$ in $(W^{0,1}_{p'}(Q(a,b)))'$ as $n\to\infty$.
\end{lemma}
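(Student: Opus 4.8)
The statement is a standard density/approximation result for parabolic Sobolev spaces of the type $\mathscr{H}^{p,1}$, where one wants to approximate a function together with its spatial gradient (in $L^p$) and its distributional time derivative (only weakly$^*$, because time regularity is one-sided). The plan is to combine a \emph{mollification in time} with a \emph{cut-off/mollification in space}, and to organize the argument in the classical two-step fashion: first reduce to compactly supported functions, then regularize.

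\emph{Step 1: spatial truncation.} Fix a standard cut-off sequence $\theta_R\in C^\infty_c(\CR^d)$ with $\one_{B(0,R)}\le\theta_R\le\one_{B(0,2R)}$ and $|\nabla\theta_R|\le C/R$, $|D^2\theta_R|\le C/R^2$. Since $u\in C_b(\overline Q(a,b))$ is bounded and $u\in W^{0,1}_p(Q(a,b))$, one checks that $\theta_Ru\to u$ in $W^{0,1}_p(Q(a,b))$ (the extra term $u\nabla\theta_R$ is $O(R^{-1})$ in $L^p$ over $Q(a,b)$ only if the domain in space were bounded; here instead one must use that $u$ and $\nabla u$ are in $L^p$ of the full strip, so $\nabla\theta_R\cdot u$ tends to $0$ in $L^p$ by dominated convergence because $|\nabla\theta_R|\le C/R\to0$ pointwise while $u$ is $L^p$-dominated). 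For the time derivative, $\partial_t(\theta_Ru)=\theta_R\partial_tu$ in the distributional sense, and since multiplication by the fixed smooth $\theta_R$ maps $W^{0,1}_{p'}(Q(a,b))$ boundedly into itself, $\theta_R\partial_tu\to\partial_tu$ weakly$^*$ in $(W^{0,1}_{p'}(Q(a,b)))'$. Also $\theta_Ru\to u$ locally uniformly. Hence it suffices to prove the claim for $u$ with compact support in the $x$-variable (still only on the time interval $[a,b]$, with no boundary regularity in $t$).

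\emph{Step 2: extension and mollification.} With $u$ now compactly supported in $x$, extend it in time to all of $\CR$: reflect or, more safely, multiply by a fixed function and translate so that the extension $\tilde u$ lies in $\mathscr{H}^{p,1}(\CR^{d+1})$ with compact support in $x$; the extension can be done so that $\partial_t\tilde u$ is controlled in $(W^{0,1}_{p'})'$. (Here one uses that $[a,b]$ is a bounded interval and that interior approximation on $Q(a,b)$ only requires the extension to be defined on a slightly larger strip.) Then mollify: set $u_n:=(\tilde u)*\rho_n$ with $\rho_n$ a standard mollifier on $\CR^{d+1}$. Then $u_n\in C^\infty$, is compactly supported in $x$ for $n$ large (and can be truncated smoothly in $t$ to be in $C^\infty_c(\CR^{d+1})$), $u_n\to u$ in $W^{0,1}_p(Q(a,b))$ and uniformly (because $\tilde u$ is also continuous and bounded there), and $\partial_tu_n=(\partial_t\tilde u)*\rho_n\to\partial_t\tilde u$ weakly$^*$ in $(W^{0,1}_{p'}(Q(a,b)))'$ — the weak$^*$ (rather than norm) convergence being exactly what one gets for the distributional time derivative, since $\partial_t\tilde u$ is only a bounded functional, not an $L^p$ function.

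\emph{Main obstacle.} The delicate point is the \textbf{weak$^*$ convergence of the time derivatives under the spatial cut-off and mollification}, carried out consistently with the \emph{locally uniform} convergence of the $u_n$ themselves. One must verify that the pairing $\langle\partial_tu_n,\varphi\rangle\to\langle\partial_tu,\varphi\rangle$ for every $\varphi\in W^{0,1}_{p'}(Q(a,b))$, which requires commuting the mollification with the duality pairing and using that $\rho_n*\varphi\to\varphi$ in $W^{0,1}_{p'}$; combined with the cut-off step, one has to check that $\theta_R$ acts boundedly and "transposes" correctly on these negative-order spaces. A second, more technical nuisance is the time extension across the endpoints $t=a$ and $t=b$: since $u$ has no prescribed trace there, the extension must be done by an operator bounded on $\mathscr{H}^{p,1}$ — this is where one invokes the theory of these spaces from \cite{krylov01,mpr10}. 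Everything else (the $L^p$ convergence of $u_n$ and $\nabla_xu_n$, the uniform convergence) is completely standard mollifier bookkeeping and I would not belabor it.
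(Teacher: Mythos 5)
Your proposal is correct and follows essentially the same route as the paper's proof: a space(-time) cutoff combined with mollification, weak$^*$ convergence of the time derivatives obtained by transposing the mollifier onto the test function, and an extension across $t=a$ and $t=b$ by reflection whose distributional time derivative is checked to remain in the dual space. The only difference is cosmetic ordering — the paper first extends in time and proves the whole-space case, then localizes, whereas you truncate in space first — but the ingredients and the identified delicate points coincide.
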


\begin{proof}
We split the proof in two steps: first we prove the statement with $Q(a,b)$ being replaced with $\CR^{d+1}$ and, then, using this
result we complete the proof.

{\em Step 1.} Let $\vartheta\in C^{\infty}_c(\CR)$ be a smooth function such that $\vartheta\equiv 1$ in $(-1,1)$ and $\vartheta \equiv 0$ in $\CR \setminus(-2,2)$.
For any $\sigma>0$, any $t\in\CR$ and any $x\in\CR^d$, set $\vartheta_{\sigma}(t,x)=\vartheta(|t|/\sigma)\vartheta(|x|/\sigma)$.
Next, we define the function $u_n\in C^{\infty}_c(\CR^{d+1})$ by setting
\begin{align*}
u_n(t,x)&=n^{d+1}\vartheta_n(t,x)\int_{\CR^{d+1}}u(s,y)\vartheta_{1/n}(t-s,x-y)\,ds\,dy\\
&=:n^{d+1}\vartheta_n(t,x)(u\star \vartheta_{1/n})(t,x)\,,
\end{align*}
for any $(t,x)\in\CR^{d+1}$ and any $n\in\CN$.
Clearly, $u_n$ converges to $u$ in $W^{0,1}_p(\CR^{d+1})$ and locally
uniformly in $\CR^{d+1}$.

Let us fix a function $\psi\in W^{0,1}_{p'}(\CR^{d+1})$. Applying the Fubini-Tonelli theorem and taking into
account that $\vartheta_{1/n}(r,z)=\vartheta_{1/n}(-r,-z)$ for any $(r,z)\in\CR^{d+1}$, we easily deduce that
$\langle \partial_tu_n ,\psi \rangle=\langle \partial_tu ,\psi_n\rangle $ for any $n\in\CN$,
where $\psi_n=n^{d+1}\vartheta_{1/n}\star(\vartheta_n\psi)$ and $\langle \cdot ,\cdot \rangle$ denotes the duality pairing of $W^{0,1}_{p'}(\CR^{d+1})$ and $(W^{0,1}_{p'}(\CR^{d+1}))'$.
Since $\psi_n$ converges to $\psi$ in $W^{0,1}_{p'}(\CR^{d+1})$ as $n\to \infty$, we conclude that
$\langle \partial_tu ,\psi_n\rangle \to \langle \partial_tu,\psi \rangle$ as $n\to\infty$. This shows that
$\partial_tu_n\stackrel{*}{\weak}\partial_tu$ in $(W^{0,1}_{p'}(\CR^{d+1}))'$ as $n\to\infty$.

{\em Step 2.} Let us now consider the general case.
We extend $u\in {\mathscr H}^{p,1}(Q(a,b))\cap C_b(\overline Q(a,b))$ to $(3a-2b,2b-a)$, by symmetry, first with respect to $t=b$ and then with respect to $t=a$.
The so obtained function $v$ belongs to ${\mathscr H}^{p,1}(Q(3a-2b,2b-a))\cap C_b(\overline Q(3a-2b,2b-a))$.
Proving that $v\in W^{0,1}_p(Q(3a-2b,2b-a))\cap C_b(\overline Q(3a-2b,2b-a))$ is immediate.
Hence, it remains to prove
that the distributional derivative $\partial_tv$ belongs to
$(W^{0,1}_{p'}(Q(3a-2b,2b-a)))'$. To that end fix $\varphi\in C^{\infty}_c(Q(3a-2b,2b-a))$ and observe that
\begin{align}
\int_{Q(3a-2b,2b-a)}v\partial_t\varphi\,dt\,dx
=\int_{Q(a,b)}u\partial_t\Phi\,dt\,dx\,,
\label{extension-1}
\end{align}
where the function $\Phi=\varphi-\varphi(2b-\cdot,\cdot)-\varphi(2a-\cdot,\cdot)+\varphi(2a-2b+\cdot,\cdot)$
belongs to $W^{0,1}_{p'}(Q(a,b))$. It follows immediately that
$\langle \partial_tv ,\varphi \rangle=\langle \partial_t u ,\Phi \rangle$. The density of $C^{\infty}_c(Q(a,b))$ in $W^{0,1}_{p'}(Q(a,b))$ implies that
$\partial_tv\in (W^{0,1}_{p'}(Q(3a-2b,2b-a)))'$.

We now fix a function $\zeta\in C^{\infty}_c((3a-2b,2b-a))$ such that $\zeta\equiv 1$ in $[a,b]$.
Applying Step 1 to the function $(t,x)\mapsto \zeta(t) v(t,x)$, which belongs to ${\mathscr H}^{p,1}(\CR^{d+1})\cap C_b(\CR^{d+1})$,
we can find a sequence $(u_n)\subset C^{\infty}_c(\CR^{d+1})$ converging
to the function $\zeta v$ locally uniformly in $\CR^{d+1}$ and in $W^{0,1}_p(\CR^{d+1})$, and such that $\partial_tu_n\stackrel{*}{\weak}\partial_t(\zeta v)$ in $(W^{0,1}_{p'}(\CR^{d+1}))'$.
Clearly, $u_n$ converges to $u$ locally uniformly in $\overline Q(a,b)$ and in $W^{0,1}_p(Q(a,b))$. Moreover,
fix $\varphi\in W^{0,1}_{p'}(Q(a,b))$ and denote by $\overline\varphi$ the null extension of $\varphi$ to the whole of $\CR^{d+1}$.
Clearly, $\overline\varphi$ belongs to $W^{0,1}_{p'}(\CR^{d+1})$.
Since
\begin{align*}
\int_{Q(a,b)}\partial_tu_n\varphi\,dt\,dx=
\int_{\CR^{d+1}}\partial_tu_n\overline\varphi\,dt\,dx\,
\end{align*}
and $\partial_t u_n\stackrel{*}{\weak} \partial_t(v\zeta)$ in $(W^{0,1}_{p'}(\CR^{d+1}))'$, from formula \eqref{extension-1}
and since $\zeta'\overline\varphi\equiv 0$ and $\zeta\overline\varphi\equiv\varphi$,
it follows that
\begin{eqnarray*}
 \lim_{n\to\infty}\int_{Q(a,b)}\partial_tu_n\varphi\,dt\,dx &=&
\langle \partial_t(\zeta v),\overline\varphi \rangle
=\int_{Q(a,b)} v\zeta'\overline\varphi\,dt\,dx +\langle \partial_t v,\zeta\overline\varphi\rangle \\
&=& \langle \partial_tv ,\overline\varphi\rangle =\langle \partial_tu ,\varphi\rangle \,.
\end{eqnarray*}
This completes the proof.
\qed
\end{proof}

\begin{lemma}\label{l.bdd}
Let $0\leq a<b <t$ and $x \in \CR^d$. Moreover, assume that $g_0(t, \cdot, x, \cdot) \in L^\infty (Q(a,b))$. Then,
$g(t, \cdot, x, \cdot) \in C_b(\overline Q(a,b))$. Moreover,
if for some $q>1$ we have $\Gamma_1(q,x,a,b)< \infty$
and $\Gamma_2(q,x,a,b)<\infty$,
then  $g(t,\cdot, x, \cdot) \in \mathscr{H}^{p,1}(Q(\tilde a,\tilde b))$ for all $p \in (1,q)$ and any $a<\tilde a<\tilde b<b$.
\end{lemma}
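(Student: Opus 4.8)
The first assertion rests on a domination principle. Since $V\ge 0$ and each $G(t,s)$ is positivity preserving, a Duhamel comparison of the Cauchy problems for $\A(s)=\A_0(s)-V(s)$ and for $\A_0(s)$ gives $G(t,s)f\le G_0(t,s)f$ for every $0\le f\in C_b(\CR^d)$, hence $g(t,s,x,y)\le g_0(t,s,x,y)$ pointwise. Consequently $g_0(t,\cdot,x,\cdot)\in L^\infty(Q(a,b))$ forces $g(t,\cdot,x,\cdot)\in L^\infty(Q(a,b))$, and combining this with the local H\"older continuity of $g(t,\cdot,x,\cdot)$ on $(0,t)\times\CR^d$ from Lemma \ref{lem-reg-g} yields $g(t,\cdot,x,\cdot)\in C_b(\overline Q(a,b))$ (when $a=0$, continuity up to $s=0$ is inherited from that of the evolution family). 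Since moreover $\int_{\CR^d}g(t,s,x,y)\,dy=G(t,s)\one(x)\le 1$, the function $w:=g(t,\cdot,x,\cdot)$ belongs to $L^1(Q(a,b))\cap L^\infty(Q(a,b))$, hence to $L^r(Q(a,b))$ for every $r\in[1,\infty]$.

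For the second assertion, observe that $w$ is, for fixed $(t,x)$, a distributional solution on $Q(0,t)$ of the adjoint equation $\partial_sw+\A(s)^*w=0$, where $\A(s)^*\varphi=\sum_{ij}D_{ij}(q_{ij}\varphi)-\sum_iD_i(F_i\varphi)-V\varphi$: indeed $\langle\partial_sw+\A(s)^*w,\varphi\rangle=0$ for $\varphi\in C^{1,2}_c((0,t)\times\CR^d)$ is a restatement of \eqref{eq.weak} for test functions with compact support in time. Because $q_{ij}\in C^{0,1}$ and, in this section, $q_{ij}$ and $D_kq_{ij}$ are bounded on $Q(0,b)$, we may write $\A(s)^*\varphi=\mathrm{div}\big(Q(s)\nabla\varphi+\mathbf b(s)\varphi\big)-V(s)\varphi$ with $\mathbf b_i=\sum_jD_jq_{ij}-F_i$, so that $w$ solves $\partial_sw+\mathrm{div}(Q\nabla w+\mathbf bw)-Vw=0$ weakly; after reversing time this is uniformly parabolic with bounded measurable leading coefficients, the only unbounded lower-order data being the summand $-F$ in $\mathbf b$ and the zeroth-order coefficient $V$. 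Fixing $a<\tilde a<\tilde b<b$ and $\theta\in C^\infty_c((a,b))$ with $\theta\equiv1$ on $[\tilde a,\tilde b]$, the function $v:=\theta w$ (which, $\theta$ being $y$-independent, has vanishing data at the endpoints of $(a,b)$) solves on $Q(a,b)$
\[
\partial_sv+\mathrm{div}(Q\nabla v)=-\mathrm{div}(\mathbf bv)+Vv+\theta'w .
\]
Here $(\sum_jD_jq_{\cdot j})v$ and $\theta'w$ lie in $L^r(Q(a,b))$ for every $r$, and for $1<p<q$ the boundedness of $v$, the hypotheses $\Gamma_1(q,x,a,b),\Gamma_2(q,x,a,b)<\infty$ and $\int_{Q(a,b)}w\le b-a$ give, by H\"older,
\[
\int_{Q(a,b)}|F|^p|v|^p\le\|\theta\|_\infty^p\|w\|_\infty^{p-1}\int|F|^pw\le\|\theta\|_\infty^p\|w\|_\infty^{p-1}\Big(\int|F|^qw\Big)^{p/q}(b-a)^{1-p/q}<\infty ,
\]
and similarly $Vv\in L^p(Q(a,b))$; hence the right-hand side of the equation for $v$ lies in $(W^{0,1}_{p'}(Q(a,b)))'$ for every $p\in(1,q)$.

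One then invokes the $L^p$-solvability theory for $u\mapsto\partial_su+\mathrm{div}(Q(s)\nabla u)$ with bounded measurable, uniformly elliptic coefficients (see \cite{krylov01,mpr10}): with zero endpoint data this operator is an isomorphism onto $(W^{0,1}_{p'})'$, so the displayed equation has a unique solution in $\mathscr{H}^{p,1}(Q(a,b))$, and by uniqueness of bounded distributional solutions of the homogeneous equation this solution is $v$. Thus $v\in\mathscr{H}^{p,1}(Q(a,b))$, whence $g(t,\cdot,x,\cdot)=w\in\mathscr{H}^{p,1}(Q(\tilde a,\tilde b))$ for all $p\in(1,q)$. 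We note that once $\nabla_yw\in L^p_{\mathrm{loc}}$ is available, the bound $\partial_sw\in(W^{0,1}_{p'})'_{\mathrm{loc}}$ alone is routine: writing $\langle\partial_sw,\varphi\rangle=-\int w\,\A(s)\varphi$ for $\varphi\in C^\infty_c$, integrating by parts once in the second-order part, and estimating each resulting integral by $\|\varphi\|_{W^{0,1}_{p'}}$ with the help of the gradient bound, the bounds on $q_{ij},D_kq_{ij}$, and the $L^p$-integrability of $Fw$ and $Vw$ just obtained.

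The main obstacle is thus precisely this upgrade from a merely bounded distributional solution to an $\mathscr{H}^{p,1}$ function. A naive energy estimate obtained by testing the equation with $\theta^2w$ only controls $\nabla w$ in $L^2_{\mathrm{loc}}$ and, moreover, would require $q\ge2$ in order to absorb the term $\int|F|^2w^2\theta^2$; since $q>1$ is arbitrary one genuinely needs the $L^p$ parabolic theory rather than $L^2$ methods, and one must carefully justify that the $\mathscr{H}^{p,1}$-solution it produces coincides with the (localized) kernel. Here Lemma \ref{lem-cortona-0}, which approximates functions in $\mathscr{H}^{p,1}\cap C_b$ by smooth ones, is the natural device for the needed uniqueness/consistency argument.
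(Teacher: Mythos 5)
Your proposal is correct and takes essentially the same route as the paper: the first assertion is obtained exactly as in the text (maximum-principle domination $g\le g_0$ plus the H\"older continuity of Lemma \ref{lem-reg-g}), and your divergence-form rewriting, time-localization, H\"older estimates via $\Gamma_1,\Gamma_2$, and appeal to the $L^p$ parabolic isomorphism is precisely an unpacking of \cite[Lemma 3.2]{mpr10} (resp.\ \cite[Lemma 3.2]{klr13}), which the paper simply cites after noting that the potential term is handled by the finiteness of $\Gamma_2$. You correctly identify the identification of the distributional solution with the $\mathscr{H}^{p,1}$ solution as the delicate step and the role of Lemma \ref{lem-cortona-0} there, so no gap remains.
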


\begin{proof}
By the maximum principle, $g(t,\cdot,x,\cdot) \leq g_0(t,\cdot,x,\cdot)$ almost surely. Hence, $g(t,\cdot,x,\cdot) \in L^\infty (Q(a,b))$. The continuity of the function $g(t,\cdot,x,\cdot)$ follows from Lemma \ref{lem-reg-g}.
To infer that $g(t,\cdot,x,\cdot)$ belongs to $\mathscr{H}^{p,1}(Q(\tilde a,\tilde b))$, for any $\tilde a$ and $\tilde b$ as in the statement of the lemma, we want to use \cite[Lemma 3.2]{mpr10} (see also \cite[Lemma 3.2]{klr13} for the nonautonomous situation). We note that the proof of that lemma remains valid for operators with potential term, provided that both $\Gamma_1(q,x,a,b)<\infty$ and $\Gamma_2(q,x,a,b)< \infty$.
Thus \cite[Lemma 3.2]{klr13} yields
$g \in \mathscr{H}^{p,1}(Q(\tilde a,\tilde b))$ for all $p \in (1, q)$.
\qed
\end{proof}

We next establish the kernel estimates. To that end, we use time-dependent Lyapunov functions. We make the following
assumptions.

\begin{hyp}\label{hyp2}
Fix $0< t\leq 1$, $x \in \CR^d$ and $0< a_0<a<b< b_0<t$.
Let time dependent Lyapunov functions $W_1, W_2$ with $W_1 \leq W_2$ and a weight function
$1 \leq w \in C^{1,2}(Q(0,t))$ be given such that
\begin{enumerate}
\item
the functions $w^{-2}\partial_sw$ and $w^{-2}\nabla_y w$ are bounded on $Q(a_0,b_0)$;
\item
there exist a constant $k>d+2$ and constants $c_1,\ldots,c_7\geq 1$, possibly depending on the interval $(a_0,b_0)$, such that
\[
\begin{array}{ll}
\mathrm{(i)}\quad
w\le c_1w^{\frac{k-2}{k}}W_1^{\frac{2}{k}}\,, & \mathrm{(ii)} \quad |Q\nabla_y w| \leq c_2 w^{\frac{k-1}{k}}W_1^\frac{1}{k}\,,\\
 \mathrm{(iii)}\!\!\! \quad |\mathrm{Tr} (Q D^2 w)| \leq c_3w^{\frac{k-2}{k}}W_1^\frac{2}{k}\,,
& \mathrm{(iv)} \quad |\partial_sw|\leq c_4w^{\frac{k-2}{k}}W_1^\frac{2}{k}\,,\\
 \mathrm{(v)} \quad  |\sum_{i=1}^d D_iq_{ij}| \leq c_5 w^{-\frac{1}{k}}W_2^\frac{1}{k}\,,\quad & \\
\hfill \mathrm{and} &\\
\mathrm{(vi)} \quad |F|\leq c_6w^{-\frac{1}{k}}W_2^{\frac{1}{k}}\,, &\mathrm{(vii)}\quad V^{\frac{1}{2}}\le c_7w^{-\frac{1}{k}}W_2^{\frac{1}{k}},
\end{array}
\]
on $Q(a_0,b_0)$;
\item $g_0(t, \cdot, x, \cdot) \in L^\infty (Q(a_0,b_0))$.
\end{enumerate}
\end{hyp}

Having fixed $t$ and $x$, we write $\rho(s,y) := g(t,s,x,y)$ to simplify notation. We can now prove the main result of this section.

\begin{theorem}\label{t.mainbdd}
Assume Hypotheses \ref{hyp2}. Then there exists a positive constant $C_1$, depending only on $d, k$ and $\eta$,
such that
\begin{align}\label{eq.mainest}
w\rho \leq C_1&\bigg [  c_1^\frac{k}{2}\sup_{s\in (a_0,b_0)}\zeta_{W_1}(s)  + \bigg (
\frac{c_1^{\frac{k}{2}}}{(b_0-b)^{\frac{k}{2}}}+c_2^k
+c_3^{\frac{k}{2}}+c_4^{\frac{k}{2}}\bigg ) \int_{a_0}^{b_0}\zeta_{W_1}(s)\, ds \notag\\
&+ \bigg (c_2^{\frac{k}{2}}c_6^{\frac{k}{2}}+c_5^k+c_6^k+c_7^k\bigg )\int_{a_0}^{b_0} \zeta_{W_2}(s)\, ds \bigg]
\end{align}
in $Q(a,b)$.
\end{theorem}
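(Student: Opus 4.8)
The plan is to prove that $v:=w\rho$ is bounded on $Q(a,b)$ by an $L^1$-to-$L^\infty$ Moser-type iteration on the equation satisfied by $\rho=g(t,\cdot,x,\cdot)$, in the spirit of \cite[Lemma 7.2]{mpr10} and \cite{klr13}, while tracking how the seven structural constants of Hypothesis \ref{hyp2} and the two Lyapunov functions enter. First the preliminaries. Since $w\ge1$, the bounds (vi) and (vii) in Hypothesis \ref{hyp2} give $|F|\le c_6W_2^{1/k}$ and $V\le c_7^{2}W_2^{2/k}$ on $Q(a_0,b_0)$, whence $\Gamma_1(k/2,x,a_0,b_0)$ and $\Gamma_2(k/2,x,a_0,b_0)$ are finite, the finiteness coming from $\int_{a_0}^{b_0}\zeta_{W_2}(s)\,ds<\infty$ (Proposition \ref{p.lyapunov}). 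By Hypothesis \ref{hyp2}(3) and Lemma \ref{l.bdd} this yields $\rho\in C_b(\overline Q(\tilde a,\tilde b))\cap\mathscr{H}^{p,1}(Q(\tilde a,\tilde b))$ for every $p\in(1,k/2)$ and $a_0<\tilde a<\tilde b<b_0$, and $\rho$ solves $\partial_s\rho+\sum_{i,j}D_{ij}(q_{ij}\rho)-\sum_iD_i(F_i\rho)-V\rho=0$ weakly on $Q(a_0,b_0)$, which is a restatement of \eqref{eq.weak}. Finally, (i) together with $w\ge1$ gives $w\le c_1^{k/2}W_1$, hence $\|v^{1/2}(s,\cdot)\|_{L^2(\CR^d)}^2=\|v(s,\cdot)\|_{L^1(\CR^d)}\le c_1^{k/2}\zeta_{W_1}(s)$ for every $s$; this uniform-in-$s$ bound (together with $\rho\le g_0\in L^\infty(Q(a_0,b_0))$ from the maximum principle) is the base level of the iteration and is the source of the term $c_1^{k/2}\sup_s\zeta_{W_1}(s)$ in \eqref{eq.mainest}.

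The core of the argument is a weighted Caccioppoli estimate. Substituting $\rho=v/w$ and invoking Hypothesis \ref{hyp2}(1), one recasts the equation as a uniformly parabolic divergence-form equation for $v$ whose first- and zeroth-order coefficients are dominated, by (i)--(vii), by the weights $w^{-1/k}W_i^{1/k}$ and $w^{-2/k}W_i^{2/k}$, $i=1,2$. For $\gamma>1/2$ I test this $v$-equation — after regularizing via Lemma \ref{lem-cortona-0}, which is exactly what legitimizes the integrations by parts for $\rho\in\mathscr{H}^{p,1}$, and after a routine localization in space and truncation of $v$ — with $\psi^{2}v^{2\gamma-1}$, where $\psi=\psi(s)$ is a Lipschitz cutoff equal to $1$ on the target subinterval and vanishing near $b_0$, with $|\psi'|$ of order $(b_0-b)^{-1}$. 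Using $Q\ge\eta I$ to keep $\eta\iint\psi^2|\nabla(v^{\gamma})|^2$ on the left, Young's inequality to absorb the genuine first-order terms into it, and the structural bounds to rewrite the rest, a Hölder splitting with the exponents forced by the weights (which recombines $w^{\pm1/k}W_i^{\pm1/k}$ into $W_i\rho$ to the first power times a power of $v^{2\gamma}$) produces an energy inequality in which $\int_{a_0}^{b_0}\zeta_{W_1}$ is multiplied by $(b_0-b)^{-k/2}c_1^{k/2}+c_2^{k}+c_3^{k/2}+c_4^{k/2}$ and $\int_{a_0}^{b_0}\zeta_{W_2}$ by $c_2^{k/2}c_6^{k/2}+c_5^{k}+c_6^{k}+c_7^{k}$. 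The zeroth-order-in-$v$ contributions — the $\psi'$ term (acquiring $c_1$ when $w$ is traded for $W_1$), the terms from $\mathrm{Tr}(QD^2w)$ and $\partial_sw$ (constants $c_3,c_4$, paired with $W_1$), and the cross term from $F\cdot\nabla w$ (constant $c_2c_6$, paired with $W_2$ since $W_1\le W_2$) — come out with exponent $k/2$, while the genuine first-order terms, coming from $\nabla w/w$ and from $\sum_iD_iq_{ij}$ and $F$, must be handled by Young's inequality, which squares their constants and yields exponent $k$ ($W_1$ for $c_2$, $W_2$ for $c_5,c_6$). The nonnegative potential enters only through (vii), producing the single term $c_7^{k}$.

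The final step is the iteration: combining this energy inequality with the parabolic Sobolev embedding $L^\infty_sL^2_y\cap L^2_sW^{1,2}_y\hookrightarrow L^{2(d+2)/d}_{s,y}$ applied to $v^{\gamma}$ upgrades $L^{2\gamma}$-control of $v$ on a cylinder to $L^{2\gamma\chi}$-control, $\chi:=(d+2)/d>1$, on a slightly shorter one, at the cost of a factor $C\gamma^{2}$ and a negative power of the gap; iterating along $\gamma=\tfrac12\chi^{n}$, $n\ge0$, over a shrinking family of cylinders whose lengths remain $\ge b-a$ and letting $n\to\infty$, one obtains $\|v\|_{L^\infty(Q(a,b))}$ bounded by the right-hand side of \eqref{eq.mainest} with $C_1=C_1(d,k,\eta)$. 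The main obstacle is precisely the bookkeeping in the Caccioppoli step: the powers in the test function and in the Hölder splittings must be chosen so that no residual power of $w$ survives, so that the geometric series of step-weights telescopes to leave $\int\zeta_{W_i}$ to the first power, and so that the surviving exponents on the $c_j$ stabilize at exactly $k/2$ or $k$; it is here, in matching the Hölder exponents imposed by $w^{\pm1/k}W_i^{\pm1/k}$ against the Sobolev exponent $(d+2)/d$, that $k>d+2$ is used — it is precisely what makes the first-order terms absorbable and the iteration convergent. The genuinely new feature compared with \cite{klr13}, the potential $V$, causes no new difficulty: $V\ge0$ gives it the favourable sign after the substitution $\rho=v/w$, and (vii) bounds $V^{1/2}$ — not $V$ — by the very weight already used for the drift, so $V$ fits into the scheme just like a lower-order drift-type term.
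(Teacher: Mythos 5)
Your proposal follows essentially the same route as the paper: both arguments are an $L^\infty$ bound for $v=w\tilde\rho$ obtained from the weak (double-divergence) formulation via a De Giorgi/Moser-type iteration, with exactly the same use of each item of Hypothesis \ref{hyp2}, and your accounting of which constants end up with exponent $k$ (the first-order terms, squared by Young) versus $k/2$ (the zeroth-order ones), and which are paired with $\zeta_{W_1}$ versus $\zeta_{W_2}$, matches the paper's \eqref{eq.mainest} term by term. The paper simply packages the iteration as a citation of \cite[Theorem 3.7]{klr13} (whose only point needing re-justification here, formula \eqref{AA0} in the regime $p\le d+2$, is handled via Lemma \ref{lem-cortona-0} and the continuity of $\rho$ from Lemma \ref{l.bdd} --- you correctly identified this as the role of Lemma \ref{lem-cortona-0}).

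Two steps of the paper's proof are missing from your sketch and are not mere presentation. First, the paper begins by assuming $w$ and $\nabla_y w$ bounded and only removes this at the very end by replacing $w$ with $w_\eps=w/(1+\eps w)$ (which satisfies Hypothesis \ref{hyp2}(2) with the same constants, by Hypothesis \ref{hyp2}(1)). Without some such reduction you cannot assert that $w\tilde\rho\in\mathscr{H}^{p,1}$, nor that $X:=\|w\tilde\rho\|_\infty^{1/k}$ is finite a priori; truncating $v$ does not substitute for this, since the object you test is the equation satisfied by $w\tilde\rho$, not by its truncation. Second, your H\"older splittings (e.g.\ $w^{-2/k}W_2^{2/k}v^{2\gamma}=(W_2\rho)^{2/k}v^{2\gamma-2/k}$) necessarily leave a deficit power of $v$ at each step, so what the iteration actually delivers is not \eqref{eq.mainest} directly but a self-referential inequality of the form $X^k\le\alpha X^{k/2}+\beta X^{k-1}+\gamma X^{k-2}$ with $\alpha,\beta,\gamma$ built from $c_1^{k/4}\bar M^{1/2}$, $c_2M_1^{1/k}+(c_5+c_6)M_2^{1/k}$ and $(c_1(b_1-b)^{-1}+c_3+c_4)M_1^{2/k}+(c_2c_6+c_7^2)M_2^{2/k}$. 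The paper devotes a separate monotonicity argument to solving this for $X$ (using that $X<\infty$, which again rests on the boundedness of $w$), and only then does raising to the powers $2$, $k$, $k/2$ produce the first powers of $\sup_s\zeta_{W_1}$, $\int\zeta_{W_1}$ and $\int\zeta_{W_2}$ appearing in \eqref{eq.mainest}. Your assertion that the telescoping "leaves $\int\zeta_{W_i}$ to the first power" is precisely this step, and it needs to be carried out explicitly.
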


\begin{proof}
We first assume that the weight function $w$, along with its first order partial derivatives is bounded.
It follows from Hypothesis \ref{hyp2}(2)(i) and (vi) that
\begin{align*}
\Gamma_1(k/2,x,a_0,b_0)^{\frac{k}{2}} & = \int_{Q(a_0,b_0)} |F(s,y)|^{\frac{k}{2}} g(t,s,x,y)\, ds\,dy\\
& \leq \int_{Q(a_0,b_0)} w(s,x)|F(s,x)|^{\frac{k}{2}} g(t,s,x,y)\, ds\,dy\\
& \leq c_6^{\frac{k}{2}}\int_{Q(a_0,b_0)} w(s,y)^{\half}W_2(s,y)^{\half} g(t,s,x,y)\, ds\,dy \\
& \leq c_1^{\frac{k}{4}}c_6^{\frac{k}{2}}\int_{Q(a_0,b_0)} W_2(s,y) g(t,s,x,y)\, ds\,dy < \infty,
\end{align*}
as a consequence of Proposition \ref{p.lyapunov}.
Moreover, using Hypothesis \ref{hyp2}(2)(vii) instead, it follows that
\[
\Gamma_2(k/2,x,a_0,b_0)^{\frac{k}{2}}\leq c_7^k\int_{a_0}^{b_0}\zeta_{W_2}(s,x)\, ds < \infty.
\]
We  thus infer from Lemma \ref{l.bdd} that $g(t,\cdot, x, \cdot) \in L^\infty (Q(a_0,b_0)) \cap \mathscr{H}^{p,1}(Q(a_1,b_1))$ for
all $p \in (1, \frac{k}{2})$, where $a_0<a_1<a<b<b_1<b_0$.
\medskip

Let $\vartheta:\CR\to\CR$ be a smooth function with $\vartheta (s)=1$ for $s \in [a,b]$, $\vartheta (s) = 0$ for $s \geq b_1$, $0\le\vartheta\le 1$ and $|\vartheta'| \leq
2(b_1-b)^{-1}$ in $\CR$. Given $\psi \in C^{1,2}_c(Q(a_1,b_1))$, we put
$\varphi(s,y) := \vartheta(s)^{\frac{k}{2}}w(s,y)\psi (s,y)$. It follows
from \eqref{eq.weak} that
\begin{equation}\label{eq.1}
\int_{Q(a_1,b_1)}\big[\partial_s \varphi (s,y) - \A (s)\varphi (s,y)\big]\rho (s,y)\, ds\, dy = 0.
\end{equation}
We write $\tilde \rho := \vartheta^\frac{k}{2}\rho$ and note that $w\tilde\rho\in \mathscr{H}^{p,1}(Q(a_1,b_1))$ for all $p \in (1, \frac{k}{2})$, since $w$ and its derivatives are bounded. Thus with some standard computations involving integration by parts we derive from
\eqref{eq.1} that
\begin{align*}
&  \phantom{=}\int_{Q(a_1,b_1)}\big [\langle Q\nabla_y (w\tilde{\rho}), \nabla_y \psi\rangle - \psi\partial_s (w\tilde{\rho})\big ] \, ds\,dy\\
& =  \int_{Q(a_1,b_1)} \tilde{\rho} \bigg( 2 \sum_{i,j=1}^d q_{ij}(D_iw)(D_j \psi) - \sum_{i,j=1}^d w (D_iq_{ij})(D_j\psi)+ w \langle F,\nabla_y \psi\rangle\bigg) \, ds\,dy\\
& \qquad-\frac{k}{2}\int_{Q(a_1,b_1)}\rho w\psi \vartheta^{\frac{k-2}{k}}\vartheta'\, ds\,dy\\
& \qquad + \int_{Q(a_1,b_1)}  \psi\big(\tilde{\rho}\mathrm{Tr}(QD^2 w)+\tilde{\rho}\langle F,\nabla_y w\rangle - \tilde{\rho} Vw- \tilde{\rho} \partial_s w\big)\, ds\,dy\,,
\end{align*}
where, with a slight abuse of notation, we denote by $\int_{Q(a_1,b_1)}\psi \partial_s(w\overline\rho)\,ds\,dy$
the pairing between $\partial_s(w\overline\rho)\in (W^{0,1}_{p'}(Q(a_1,b_1)))'$ and $\psi\in W^{0,1}_{p'}(Q(a_1,b_1))$.

We now want to apply \cite[Theorem 3.7]{klr13} to the function $u =w\tilde\rho$ and infer that
there exists a constant $C$, depending only on $\eta, d$ and $k$ (but not on $\|Q\|_\infty)$, such that
\begin{align}\label{eq.inftyest}
\|w\tilde{\rho} \|_{\infty}\leq
C \bigg( &\|w\tilde{\rho}\|_{\infty,2} +
\|\tilde{\rho} Q\nabla_y w\|_{k} +\|\tilde{\rho} Fw\|_{k}
+ \sum_{j=1}^d\bigg\|\tilde{\rho}w\sum_{i=1}^d D_iq_{ij}\bigg\|_{k} + \|\tilde\rho Vw\|_\frac{k}{2}\notag\\
&\!\!\!\!\!\!\!+  \frac{k}{b_1-b}\|\rho w\vartheta^{\frac{k-2}{k}}\|_{\frac{k}{2}}
+ \|\tilde{\rho} \mathrm{Tr}(QD^2w) \|_{\frac{k}{2}}
  + \|\tilde{\rho}\partial_sw \|_{\frac{k}{2}}
+ \|\tilde{\rho}F\cdot\nabla_y w \|_{\frac{k}{2}}  \bigg)\,,
\end{align}
where for $p\in [1,\infty)$ we denote by $\|f\|_p$ the usual $L^p$-norm of the function $f:Q(a_1,b_1)\to\CR$.
Moreover, $\|f\|_{\infty,2}:=\sup_{s\in (a_1,b_1)}\|f(s,\cdot)\|_{L^2(\CR^d)}$.

Note that a major tool in the proof of that theorem is the formula
\begin{equation}
\int_{Q(a_1,b_1)}\vartheta(v-\ell)_+\partial_tv\,dt\,dx
=\frac{1}{2}\left[\int_{\CR^d}\vartheta(v(b_1)-\ell)_+^2\,dx-\int_{\CR^d}\vartheta(v(a_1)-\ell)_+^2\,dx\right]\,.
\label{AA0}
\end{equation}
satisfied by $v=w\tilde\rho$, any $\ell>0$ and any nonnegative function
$\vartheta\in C^{\infty}_c(\CR^d)$, if $p>d+2$.
However, formula \eqref{AA0} is satisfied also in the case $p\le d+2$, which is our situation,
if we additionally assume that $v\in C_b(\overline Q(a_1,b_1))$ (which follows from Lemma \ref{l.bdd}). Its proof can be obtained arguing as
in \cite[Lemma 3.6]{klr13} taking Lemma \ref{lem-cortona-0} into account, with slight and straightforward changes.
Once formula \eqref{AA0} is established, the proof of \eqref{eq.inftyest} follows the same lines
as in \cite[Theorem 3.7]{klr13} with no changes.

We now estimate the terms in the right-hand side of \eqref{eq.inftyest}, using part (2) of Hypothesis \ref{hyp2}. We have
\begin{align*}
\|\tilde\rho Q\nabla_y w\|_k^k & = \int_{Q(a_1,b_1)}|\tilde\rho Q\nabla_y w|^k\, ds\,dy \ \leq c_2^k\int_{Q(a_1,b_1)}\tilde\rho^k w^{k-1}W_1\, ds\, dy \\
 & \leq c_2^k\|\tilde\rho w\|_\infty^{k-1}\int_{a_1}^{b_1}\zeta_{W_1}(s,x)\, ds.
\end{align*}
Let us write $M_k := \int_{a_1}^{b_1}\zeta_{W_k}(s,x)\, ds$ and $\bar{M}:= \sup_{s \in (a_1,b_1)}\zeta_1 (s,x)$. With similar estimates as above, we find
\[
\begin{array}{ll}
\|\tilde \rho F w\|_k \leq c_6\|\tilde\rho w\|_\infty^{\frac{k-1}{k}}M_2^\frac{1}{k}, \qquad & \Big\| \tilde\rho w\sum_{i=1}^d D_iq_{ij} \Big\|_k
\leq c_5\|\tilde\rho w\|_\infty^{\frac{k-1}{k}}M_2^\frac{1}{k},\\[0.5em]
\|\tilde \rho V w\|_\frac{k}{2} \leq c^2_7\|\tilde\rho w\|_\infty^{\frac{k-2}{k}}M_2^\frac{2}{k}, \qquad &
\|\rho  w\vartheta^\frac{k-2}{2}\|_\frac{k}{2}\leq c_1\|\tilde\rho w\|_\infty^{\frac{k-2}{k}}M_1^\frac{2}{k},\\[0.5em]
\|\tilde \rho \mathrm{Tr}(QD^2 w)\|_\frac{k}{2} \leq c_3\|\tilde\rho w\|_\infty^{\frac{k-2}{k}}M_1^\frac{2}{k}, \qquad &
\|\tilde \rho  \partial_s w\|_\frac{k}{2}\leq c_4\|\tilde\rho w\|_\infty^{\frac{k-2}{k}}M_1^\frac{2}{k},\\[0.5em]
\|\tilde \rho F\cdot\nabla_y w\|_\frac{k}{2} \leq \eta^{-1} c_2c_6 \|\tilde\rho w\|_\infty^{\frac{k-2}{k}}M_2^\frac{2}{k},
& \|w\tilde\rho\|_{\infty, 2} \leq c_1^\frac{k}{4} \|w\tilde\rho\|_\infty^\half \bar M^\half .
\end{array}
\]
From \eqref{eq.inftyest} and the above estimates, we obtain the following inequality for $X :=\|w\tilde\rho\|_\infty^\frac{1}{k}$\,:
\begin{eqnarray*}
X^k \leq \alpha X^{\frac{k}{2}}+ \beta X^{k-1} + \gamma X^{k-2}\,,
\end{eqnarray*}
where  $\alpha := Cc_1^\frac{k}{4}\bar M^\half$, $\beta = C\Big (c_2M_1^\frac{1}{k}+(c_6+c_5d)M_2^\frac{1}{k}\Big )$ and
\[
\gamma = C\bigg(\frac{c_1}{b_1-b} +c_3+c_4\bigg)M_1^\frac{2}{k} + C(c_2c_6+c_7^2)M_2^\frac{2}{k}.
\]
Estimating $\alpha X^{k/2}\le \frac{1}{4}{X^k}+\alpha^2$, we find
\begin{equation}
X^k \leq \frac{4}{3}\alpha^2 + \frac{4}{3}\beta X^{k-1}+\frac{4}{3}\gamma X^{k-2}.
\label{estim-X}
\end{equation}
We note that the function
\begin{align*}
f(r)=r^k-\frac{4}{3}\beta r^{k-1}-\frac{4}{3}\gamma r^{k-2}-\frac{4}{3}\alpha^2
=&r^{k-2}\left (r^2-\frac{4}{3}\beta r-\frac{4}{3}\gamma\right )-\frac{4}{3}\alpha^2\\
:=&r^{k-2}g(r)-\frac{4}{3}\alpha^2
\end{align*}
is increasing in $\bigg(\frac{4}{3}\beta+\sqrt{\frac{4}{3}\gamma}+\left (\frac{4}{3}\alpha^2\right )^{\frac{1}{k}},\infty\bigg )$ since
the functions $r\mapsto r^{k-2}$ and $g$ are positive and increasing. Moreover,
\begin{align*}
&f\bigg (\frac{4}{3}\beta+\sqrt{\frac{4}{3}\gamma}+\bigg (\frac{4}{3}\alpha^2\bigg )^{\frac{1}{k}}\bigg )
=\bigg (\frac{4}{3}\beta+\sqrt{\frac{4}{3}\gamma}+\bigg (\frac{4}{3}\alpha^2\bigg )^{\frac{1}{k}}\bigg )^{k-2}\times\\
&\qquad\qquad\quad\quad\times
\bigg [\bigg (\frac{4}{3}\alpha^2\bigg )^{\frac{2}{k}}+\bigg (\frac{4}{3}\bigg )^{\frac{3}{2}}\beta\gamma^{\half}+
2\bigg (\frac{4}{3}\bigg )^{\frac{k+2}{2k}}\alpha^{\frac{2}{k}}\bigg (\frac{\sqrt{3}}{3}\beta+\sqrt{\gamma}\bigg )\bigg ]
-\frac{4}{3}\alpha^2\\
&> \bigg (\frac{4}{3}\alpha^2\bigg )^{\frac{k-2}{k}}\bigg (\frac{4}{3}\alpha^2\bigg )^{\frac{2}{k}}-
\frac{4}{3}\alpha^2=0.
\end{align*}
From these observations and inequality \eqref{estim-X} it follows that
$X \leq \frac{4}{3}\beta+\sqrt{\frac{4}{3}\gamma}+\left (\frac{4}{3}\alpha^2\right )^{\frac{1}{k}}$. Equivalently,
\[
\|\tilde\rho w\|_\infty \le K_1\left (\alpha^2+\beta^k+\gamma^{\frac{k}{2}}\right ),
\]
for some positive constant $K_1$.
Taking into account that $c\geq 1$, one derives \eqref{eq.mainest} from this by plugging in the definitions of
$\alpha, \beta, \gamma$ and, then, letting $a_1\downarrow a_0$ and $b_1\uparrow b_0$.\medskip

To finish the proof of the theorem, it remains to remove the additional assumption on the weight $w$. To that end, we set $w_\eps := \frac{w}{1+\eps w}$. Using Hypothesis \ref{hyp2}(1), we see that $w_\eps$, along with its partial derivatives is bounded.
Straightforward computations show that Part (2) of Hypothesis \ref{hyp2} is satisfied with the same constants $c_1,\ldots,c_7$. Thus the first part of the proof shows that \eqref{eq.inftyest} is satisfied with $w$ replaced with $w_\eps$ and the constants on the right-hand side do not depend on $\eps$. Thus, upon $\eps \downarrow 0$ we obtain
\eqref{eq.inftyest} for the original $w$.
\qed
\end{proof}

\section{The case of general diffusion coefficients}
We now remove the additional boundedness assumption imposed in Section \ref{sect-3}. We do this by approximating general diffusion coefficients with bounded ones, taking advantage of the fact that the constant $C_1$ obtained in Theorem \ref{t.mainbdd} does not depend on the supremum norm of the diffusion coefficients. More precisely, we approximate the diffusion matrix $Q$ as follows.
Given a function $\varphi \in C_c^\infty(\CR)$ such that
$\varphi\equiv 1$ in $(-1,1)$, $\varphi\equiv 0$ in $\CR\setminus (-2,2)$ and $|t\varphi'(t)| \leq 2$ for all $t \in \CR$, we define $\varphi_n(s,x):=
\varphi (W_1(s,x)/n)$ for $s\in [0,t]$ and $x \in \CR^d$. We put
\[
q_{ij}^{(n)}(s,x) :=\varphi_n(s,x)q_{ij}(s,x) + (1-\varphi_n(s,x))\eta\delta_{ij},
\]
where $\delta_{ij}$ is the Kronecker delta, and define the operators $\A_n(s)$ by
\[
\A_n(s) := \sum_{i,j=1}^d q_{ij}^{(n)}(s)D_{ij} + \sum_{j=1}^d F_j(s)D_j - V(s).
\]

We collect some properties of the approximating operators, omitting the easy proof.

\begin{lemma}\label{l.prop}
Each operator $\A_n$ satisfies Hypothesis \ref{hyp1} in $[0,t]$, and its diffusion coefficients are bounded together with their
first-order spatial derivatives. Moreover,
 any time dependent Lyapunov function for the operator $\partial_s -\A (s)$ on $[0,t]$ is a time dependent Lyapunov function for the operator $\partial_s -\A_n(s)$ with respect to the same $h$.
\end{lemma}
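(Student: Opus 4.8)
The plan is to verify each of the three assertions in Lemma \ref{l.prop} in turn, all of which should follow by direct inspection of how the coefficients $q_{ij}^{(n)}$ are built from $q_{ij}$ via the cutoff $\varphi_n$.

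First I would check Hypothesis \ref{hyp1}. Parts (1) and (3) are immediate: the drift $F$ and potential $V$ are unchanged, $V\ge 0$ still holds, and $q_{ij}^{(n)}$ inherits the requisite local H\"older/Lipschitz regularity in $(0,1)\times\CR^d$ because $\varphi_n$ is smooth in $x$ and locally H\"older in $t$ (being a composition of a $C^\infty$ function with $W_1$, which is $C^{1,2}$ on $(0,t)\times\CR^d$ and continuous up to $t=0$). For part (2), uniform ellipticity: write $\sum_{ij}q_{ij}^{(n)}\xi_i\xi_j=\varphi_n\sum_{ij}q_{ij}\xi_i\xi_j+(1-\varphi_n)\eta|\xi|^2$; since $0\le\varphi_n\le 1$ and both $\sum_{ij}q_{ij}\xi_i\xi_j\ge\eta|\xi|^2$ and $\eta|\xi|^2\ge\eta|\xi|^2$, the convex combination is $\ge\eta|\xi|^2$, so the \emph{same} $\eta$ works. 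Parts (4) and (5) are the only ones requiring a small computation: one must produce Lyapunov functions for $\A_n$ and for $\eta\Delta+F\cdot\nabla-V$. The key observation is that $\A_n(s)Z(x)=\varphi_n(s,x)\A(s)Z(x)+(1-\varphi_n(s,x))(\eta\Delta Z(x)+F(s,x)\cdot\nabla Z(x)-V(s,x)Z(x))$, and by Hypothesis \ref{hyp1}(4) both terms in the convex combination are $\le M$, hence $\A_n(s)Z\le M$; the bound $\eta\Delta Z+F\cdot\nabla Z-VZ\le M$ is already part of Hypothesis \ref{hyp1}(4) and does not involve the diffusion at all. The same $Z$, $Z_0$, and $M$ therefore serve for $\A_n$, so parts (4)--(5) hold verbatim. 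Boundedness of $q_{ij}^{(n)}$ and of $D_kq_{ij}^{(n)}$ follows since $q_{ij}^{(n)}=\eta\delta_{ij}$ wherever $W_1(s,x)\ge 2n$, i.e., outside a set on which $W_1$ is bounded; on the complementary set $\{W_1<2n\}$, the original $q_{ij}$ and $D_kq_{ij}$ are bounded because this set has compact closure in each time-slab $[0,b]\times\CR^d$ with $b<t$ (as $W_1(s,\cdot)\to\infty$ uniformly for $s$ in compact subsets of $[0,t)$, by Definition \ref{d.lyap}(2)), and the derivative of the cutoff contributes $\varphi'(W_1/n)n^{-1}D_kW_1\,(q_{ij}-\eta\delta_{ij})$, which is again supported where $W_1<2n$ and hence bounded.

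The last assertion is the one to state carefully. Let $W$ be a time dependent Lyapunov function for $\partial_s-\A(s)$ on $[0,t]$ with respect to $h$; I must check $W$ satisfies Definition \ref{d.lyap} for $\partial_s-\A_n(s)$ with the \emph{same} $h$. Conditions (1) and (2) of Definition \ref{d.lyap} involve only $W$ and $Z$, not the operator, so they are untouched. For condition (3), the inequality \eqref{star-star} is literally identical for $\A$ and $\A_n$ since it does not mention the diffusion matrix, so it transfers for free. For \eqref{star}, I compute, exactly as above with $W$ in place of $Z$,
\[
\partial_sW(s)-\A_n(s)W(s)=\varphi_n(s)\bigl(\partial_sW(s)-\A(s)W(s)\bigr)+(1-\varphi_n(s))\bigl(\partial_sW(s)-\eta\Delta W(s)-F(s)\cdot\nabla_xW(s)+V(s)W(s)\bigr),
\]
and both parenthesized quantities are $\ge -h(s)W(s)$ — the first by \eqref{star} for $\A$, the second by \eqref{star-star} for $\A$ — so since $W\ge 0$ and $0\le\varphi_n\le 1$, the convex combination is also $\ge -h(s)W(s)$. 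This is precisely \eqref{star} for $\A_n$ with the same $h$, finishing the proof.

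I do not anticipate a genuine obstacle here; the only point needing care is bookkeeping the support of the cutoff-derivative terms to get boundedness of $D_kq_{ij}^{(n)}$, and making sure one uses the \emph{two} Lyapunov inequalities \eqref{star} and \eqref{star-star} (for $\A$ and for $\eta\Delta+F\cdot\nabla-V$) together so that the convex-combination argument closes. This is exactly why Hypothesis \ref{hyp1}(4)--(5) and Definition \ref{d.lyap}(3) were each stated as a \emph{pair} of inequalities, one with the full operator and one with the ``diffusion-replaced'' operator.
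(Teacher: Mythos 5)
Your proof is correct, and the paper itself omits the argument entirely (``we collect some properties of the approximating operators, omitting the easy proof''), so your write-up supplies exactly the intended verification: the convex-combination identity $\A_n = \varphi_n\A + (1-\varphi_n)(\eta\Delta + F\cdot\nabla - V)$ reduces every condition to the corresponding pair of inequalities in Hypothesis \ref{hyp1}(4)--(5) and Definition \ref{d.lyap}(3), and the support argument via $\{W_1<2n\}$ gives boundedness of $q_{ij}^{(n)}$ and $D_kq_{ij}^{(n)}$ on each slab $Q(0,b)$, $b<t$. No gaps; your closing remark correctly identifies why the hypotheses were stated as pairs of inequalities.
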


It follows that the parabolic equation \eqref{eq.nee} with $\A$ replaced with $\A_n$ is wellposed and the solution is given through an evolution family
$(G_n(r,s))_{0\leq s \leq r \leq t}$. Moreover, for $s<r$ the operator $G_n(r,s)$ is given by a Green kernel $g_n(r,s,\cdot, \cdot)$. We write $\A_n^0:= \A_n + V$ and denote the Green kernel associated to the operators $\A_n^0$ by
$g_n^0$.

We make the following assumptions.

\begin{hyp}\label{hyp3}
Fix $0< t\leq 1$, $x \in \CR^d$ and $0<a_0<a<b< b_0<t$ and assume we are given time dependent Lyapunov functions $W_1, W_2$ with $W_1 \leq W_2 \leq
c_0Z^{1-\sigma}$ for some constants $c_0>0$ and $\sigma \in (0,1)$ and a weight function
$1 \leq w \in C^2(\CR^d)$ such that
\begin{enumerate}
\item
Hypotheses \ref{hyp2}(1)-(2) are satisfied;
\item
$|\Delta_y w| \leq c_8w^\frac{k-2}{k}W_1^\frac{2}{k}$ and $|Q\nabla_y W_1| \leq  c_9w^{-\frac{1}{k}} W_1 W_2^\frac{1}{k}$ on
$[a_0,b_0]\times \CR^d$, for certain constants $c_8, c_9\ge 1$;
\item for $n\in \CN$ we have $g_n^0(t, \cdot, x, \cdot) \in L^\infty(Q(a,b))$.
\end{enumerate}
\end{hyp}

In order to prove kernel estimates for the Green kernel $g$, we apply Theorem \ref{t.mainbdd} to the operators $\A_n$ and then let $n \to \infty$. To do so, we have to show that the operators $\A_n$ satisfy Hypothesis \ref{hyp2}.

\begin{lemma}
\label{lemma-4}
The operator $\A_n$ satisfies Hypothesis \ref{hyp2} with the same constants $c_1$, $c_4$, $c_6$, $c_7$ and with $c_2$, $c_3$ and $c_5$ being replaced,
respectively, by $2c_2$, $c_3+\eta c_8$ and $c_5+4c_9$.
\end{lemma}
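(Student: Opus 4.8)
The plan is to verify the seven conditions in Hypothesis \ref{hyp2}(2) (plus (1) and (3)) for the operator $\A_n$, using that only the diffusion matrix changes from $Q$ to $Q^{(n)} = \varphi_n Q + (1-\varphi_n)\eta I$ and that the drift $F$ and potential $V$ are untouched. First I would note that (1) is unaffected since it concerns only $w$, and that (3) is exactly Hypothesis \ref{hyp3}(3). Among the conditions in (2), those that do not involve the diffusion coefficients — namely (i), (iv), (vi), (vii) — carry over verbatim with the same constants $c_1, c_4, c_6, c_7$; this is immediate. So the work concentrates on (ii) $|Q^{(n)}\nabla_y w|$, (iii) $|\mathrm{Tr}(Q^{(n)}D^2 w)|$, and (v) $|\sum_i D_i q_{ij}^{(n)}|$.

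For (ii), I would write $Q^{(n)}\nabla_y w = \varphi_n Q\nabla_y w + (1-\varphi_n)\eta \nabla_y w$. The first summand is bounded by $|Q\nabla_y w| \le c_2 w^{\frac{k-1}{k}}W_1^{\frac1k}$ since $0\le\varphi_n\le 1$. For the second, I use $\eta\nabla_y w$: since $\eta I$ is (up to the ellipticity constant) controlled by $Q$ in the sense that $\eta|\nabla_y w| \le |Q\nabla_y w|$ (because $Q \ge \eta I$ gives $\langle Q\nabla_y w,\nabla_y w\rangle \ge \eta|\nabla_y w|^2$, and by Cauchy--Schwarz $|Q\nabla_y w||\nabla_y w| \ge \eta|\nabla_y w|^2$), we again get the bound $c_2 w^{\frac{k-1}{k}}W_1^{\frac1k}$. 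Adding, $|Q^{(n)}\nabla_y w|\le 2c_2 w^{\frac{k-1}{k}}W_1^{\frac1k}$, giving the new constant $2c_2$. For (iii), $\mathrm{Tr}(Q^{(n)}D^2 w) = \varphi_n\mathrm{Tr}(QD^2 w) + (1-\varphi_n)\eta\Delta_y w$, so $|\mathrm{Tr}(Q^{(n)}D^2 w)| \le |\mathrm{Tr}(QD^2 w)| + \eta|\Delta_y w| \le c_3 w^{\frac{k-2}{k}}W_1^{\frac2k} + \eta c_8 w^{\frac{k-2}{k}}W_1^{\frac2k}$ using Hypothesis \ref{hyp3}(2); hence the new constant $c_3 + \eta c_8$.

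The main obstacle is condition (v), the bound on $\sum_i D_i q_{ij}^{(n)}$, because differentiating $q_{ij}^{(n)}$ produces a term with $\nabla_y\varphi_n$, i.e.\ with $\nabla_y W_1$. Explicitly, $D_i q_{ij}^{(n)} = \varphi_n D_i q_{ij} + (D_i\varphi_n)(q_{ij}-\eta\delta_{ij})$, and $D_i\varphi_n = \frac1n\varphi'(W_1/n)D_i W_1$. On the support of $\varphi'(W_1/n)$ one has $W_1 \le 2n$, hence $|(D_i\varphi_n)(q_{ij}-\eta\delta_{ij})|$ involves $\frac1n |D_iW_1||q_{ij}|$ with $W_1\le 2n$; I would bound the relevant sum $\big|\sum_i (D_i\varphi_n)(q_{ij}-\eta\delta_{ij})\big|$ using that $|\varphi'|$ is bounded, $|t\varphi'(t)|\le 2$, and that on the support $W_1\le 2n$, so $\frac{1}{n}|\varphi'(W_1/n)| \le \frac{2}{W_1}$ there; combined with the hypothesis $|Q\nabla_y W_1| \le c_9 w^{-\frac1k}W_1 W_2^{\frac1k}$ from Hypothesis \ref{hyp3}(2) (noting $|Q\nabla_y W_1|$ dominates, up to the term involving the $\eta\delta_{ij}$, the quantity $\big|\sum_i(q_{ij}-\eta\delta_{ij})D_iW_1\big|$, again via $Q\ge\eta I$), this yields a bound of the form $\mathrm{const}\cdot c_9 w^{-\frac1k}W_2^{\frac1k}$. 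The factor $4$ in $c_5 + 4c_9$ comes from tracking the numerical constants ($|t\varphi'(t)|\le 2$ contributing a $2$, and the triangle-inequality splitting of $q_{ij}-\eta\delta_{ij}$ contributing another factor of $2$). Putting the $\varphi_n D_iq_{ij}$ piece (bounded by $c_5 w^{-\frac1k}W_2^{\frac1k}$) together with this, one gets $|\sum_i D_i q_{ij}^{(n)}| \le (c_5 + 4c_9)w^{-\frac1k}W_2^{\frac1k}$. All estimates are on $Q(a_0,b_0)$ as required, and the proof is complete once these routine but slightly delicate constant-bookkeeping computations are carried out; I would omit most of the detail, as is done elsewhere in the paper.
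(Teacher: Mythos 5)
Your proposal is correct and follows essentially the same route as the paper: the unchanged conditions are dismissed for the same reason, (ii) and (iii) are handled by the same splitting $Q^{(n)}=\varphi_n Q+(1-\varphi_n)\eta I$ together with $\eta|\nabla_y w|\le |Q\nabla_y w|$, and (v) uses the same identity $\sum_i D_i q_{ij}^{(n)}=\varphi_n\sum_i D_iq_{ij}+\tfrac{1}{n}\varphi'(W_1/n)\big[(Q\nabla_y W_1)_j-\eta D_jW_1\big]$ with $|t\varphi'(t)|\le 2$ and Hypothesis \ref{hyp3}(2) to produce the $4c_9$ term. The constant bookkeeping matches the paper exactly.
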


\begin{proof}
Since part (1) is obvious and part (3) follows directly from part (3) in Hypothesis \ref{hyp3},
we only need to check part (2) of Hypothesis \ref{hyp2}. Here, the estimates (i), (iv), (vi) and (vii) are obvious, as they do not depend on the diffusion coefficients.
Let us next note that
\[
|\nabla_y w| = |Q^{-1}Q\nabla_y w| \leq \eta^{-1}c_2w^\frac{k-1}{k}W_1^\frac{1}{k},
\]
so  that
\[
|Q_n\nabla_y w| = |\varphi_n Q\nabla_y w + (1-\varphi_n)\eta\nabla_y w|
\leq |Q\nabla_y w| + \eta|\nabla_y w| \leq 2c_2 w^\frac{k-1}{k}W_1^\frac{1}{k}.
\]
This gives (ii) for $Q_n$. As for (iii), we have
\[
|\mathrm{Tr}(Q_nD^2w)| \leq |\mathrm{Tr}(QD^2w)| + \eta |\Delta w| \leq (c_3+\eta c_8) w^{\frac{k-2}{k}}W_1^\frac{2}{k}.
\]
It remains to check (v). We note that
\[
\sum_{i=1}^d D_iq_{ij}^{(n)} = \varphi_n\sum_{i=1}^d D_iq_{ij} + \frac{\varphi'(W_1/n)}{n}\big[ (Q\nabla_y W_1)_j - \eta D_j W_1\big].
\]
As $|t\varphi'(t)| \leq 2$, it follows that
\[
\bigg|\frac{\varphi'(W_1/n)}{n} \big[ (Q\nabla_y W_1)_j - \eta D_jW_1\big ]\bigg| \leq \frac{2}{W_1} (|Q\nabla_y W_1| + \eta |\nabla_y W_1|).
\]
Consequently,
\[
\bigg|\sum_{i=1}^d D_iq_{ij}^{(n)}\bigg| \leq \bigg|\sum_{i=1}^d D_iq_{ij}\bigg|
+ \frac{2}{W_1} (|Q\nabla_y W_1| + \eta |\nabla_y W_1|) \leq (c_5+4c_9) w^{-\frac{1}{k}}W_2^\frac{1}{k}.
\]
This finishes the proof.
\qed
\end{proof}

We shall need the following convergence result for the Green kernels.

\begin{lemma}\label{l.conv}
Fix $r \leq t$ and $x \in \CR^d$ and define $\rho_n(s,y) := g_n(r,s,x,y)$ and $\rho(s,y) := g(r,s,x,y)$ for $s \in [0,r]$ and $y \in \CR^d$. Then
$\rho_n \to \rho$, locally uniformly in $(0,r)\times \CR^d$.
\end{lemma}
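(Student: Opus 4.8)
The plan is to obtain the convergence $\rho_n \to \rho$ from a compactness-plus-uniqueness argument, exploiting the uniform bounds on the kernels $g_n$ coming from Theorem~\ref{t.mainbdd} together with parabolic interior regularity. First I would observe that $g_n(r,\cdot,x,\cdot)$ solves (in the backward variables $(s,y)$) the adjoint equation associated with $\A_n$, and that on every compact subset $K \subset (0,r)\times\CR^d$ the coefficients $q^{(n)}_{ij}$ are uniformly elliptic with ellipticity constant $\eta$ and, by Lemma~\ref{l.prop} and the construction of $q^{(n)}_{ij}$, are uniformly bounded in $C^{\varsigma/2,\varsigma}$ on $K$ once we know that $W_1$ is bounded on $K$ (which it is, being continuous). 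Similarly $F$ and $V$ are locally bounded. Hence the sequence $(\rho_n)$ is, on each such $K$, a sequence of solutions to a family of uniformly parabolic equations with locally uniformly bounded coefficients, and by the $L^\infty$ bound supplied by Theorem~\ref{t.mainbdd} (applied with the approximating operators $\A_n$, which satisfy Hypotheses~\ref{hyp2} by Lemma~\ref{lemma-4}, with constants independent of $n$) the $\rho_n$ are uniformly bounded on $K$.

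Next I would invoke interior Schauder (or at least interior $L^p$ and then Hölder) estimates for parabolic equations: the uniform $L^\infty$ bound together with the uniform ellipticity and the uniform local bounds on the coefficients give a uniform bound for $\rho_n$ in $C^{1+\varsigma/2,2+\varsigma}_{\mathrm{loc}}$ (or $C^{1,2}_{\mathrm{loc}}$, which is enough) on a slightly smaller compact set; by a diagonal argument over an exhaustion of $(0,r)\times\CR^d$ by compact sets, some subsequence $\rho_{n_j}$ converges in $C^{1,2}_{\mathrm{loc}}$ to a limit $\tilde\rho \in C^{1,2}((0,r)\times\CR^d)$. Passing to the limit in the equation, and using that $q^{(n)}_{ij} \to q_{ij}$ pointwise (indeed locally uniformly, since $\varphi_n(s,x) = \varphi(W_1(s,x)/n) \to 1$ locally uniformly as $n\to\infty$ because $W_1$ is locally bounded), one sees that $\tilde\rho$ solves the adjoint equation for $\A$ itself.

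To conclude that $\tilde\rho = \rho$ — hence that the whole sequence converges, not just a subsequence — I would identify $\tilde\rho$ with the Green kernel $g(r,\cdot,x,\cdot)$ by testing against $C_c^2$ initial data: for $f \in C_c^2(\CR^d)$ one has $\int f(y)\rho_n(s,y)\,dy = (G_n(r,s)f)(x)$, and I would show $G_n(r,s)f \to G(r,s)f$ pointwise (this is a standard stability result for the approximating evolution families, following as in \cite{klr13} — the solutions $G_n(r,s)f$ are equibounded by $\|f\|_\infty$, converge along subsequences in $C^{1,2}_{\mathrm{loc}}$ by the same interior-estimate argument, and the limit solves the Cauchy problem for $\A$ with datum $f$, which has a unique bounded solution). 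Combining, $\int f\,\tilde\rho(s,\cdot) = \int f\,\rho(s,\cdot)$ for all $f \in C_c^2(\CR^d)$ and all $s$, whence $\tilde\rho = \rho$. Since every subsequence of $(\rho_n)$ has a further subsequence converging locally uniformly to the same limit $\rho$, the full sequence converges locally uniformly in $(0,r)\times\CR^d$, as claimed.

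The main obstacle I expect is making the uniform interior regularity estimate genuinely uniform in $n$: one must check that the $C^{\varsigma/2,\varsigma}_{\mathrm{loc}}$-norms of the coefficients $q^{(n)}_{ij}$ (and the $L^\infty_{\mathrm{loc}}$-norms of $F$, $V$) are bounded independently of $n$ on the relevant compact sets, which uses that $W_1$ and $\nabla W_1$ are locally bounded so that the cutoff $\varphi_n(\cdot) = \varphi(W_1(\cdot)/n)$ and its derivatives are controlled uniformly. A secondary technical point is that the interior estimates are being applied on a \emph{backward} parabolic problem near $s = r$; this is handled by working only on compact subsets of the \emph{open} cylinder $(0,r)\times\CR^d$, which is exactly the convergence asserted, so the endpoints cause no trouble. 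Everything else — the passage to the limit in the PDE and the uniqueness identification via $C_c^2$ data — is routine.
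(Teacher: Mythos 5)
Your overall architecture --- uniform interior estimates, Arzel\`a--Ascoli plus a diagonal argument, identification of the limit by testing against compactly supported data via $G_n(r,s)f\to G(r,s)f$, and the subsequence trick to upgrade to convergence of the full sequence --- is exactly the paper's strategy, and the identification step (which the paper phrases as weak convergence of the measures $\rho_n\,ds\,dy$) is fine. The gap is in the compactness step. You propose to get uniform $C^{1+\varsigma/2,2+\varsigma}_{\loc}$ (or $C^{1,2}_{\loc}$) bounds on $\rho_n$ by applying interior Schauder (or $L^p$--H\"older) estimates to the equation satisfied by $g_n(r,\cdot,x,\cdot)$ in the backward variables. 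But that equation is the Fokker--Planck (double-divergence-form) equation, whose principal part is $\sum_{i,j}D_{ij}\bigl(q^{(n)}_{ij}\rho_n\bigr)$; to rewrite it in non-divergence form with H\"older coefficients, as classical interior Schauder theory requires, you would need two H\"older-continuous spatial derivatives of $q^{(n)}_{ij}$ and one of $F$. Hypothesis \ref{hyp1} only gives $q_{ij}\in C^{0,1}$ and $F\in C^{\varsigma/2,\varsigma}_{\loc}$, and the cutoff $\varphi_n=\varphi(W_1/n)$ inherits no better regularity. Consequently the kernels are not known to be (and under these hypotheses need not be) of class $C^{1,2}$; all that is available is local H\"older continuity (this is precisely the content of Lemma \ref{lem-reg-g}). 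So the claimed uniform $C^{1,2}_{\loc}$ bound is not obtainable by the route you describe, and De Giorgi--Nash--Moser does not rescue it either, since the equation is in double-divergence rather than divergence form.

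The repair is the one the paper uses: invoke the Bogachev--Krylov--R\"ockner regularity result \cite[Corollary 3.11]{bkr06}, which gives $\|\rho_n\|_{C^\gamma(J\times K)}\le C$ for every compact $J\times K\subset(0,r)\times\CR^d$, with $C$ and $\gamma$ depending only on local bounds for the coefficients --- and these are uniform in $n$ for exactly the reason you identify at the end of your write-up ($W_1$ and $\nabla W_1$ are locally bounded, so $\varphi_n$ and $q^{(n)}_{ij}$ are controlled uniformly on compacta; in fact $\varphi_n\equiv 1$ there for large $n$). A uniform local H\"older bound is all one needs for Arzel\`a--Ascoli and local uniform convergence, so nothing else in your argument has to change. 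Two smaller remarks: your appeal to Theorem \ref{t.mainbdd} for the uniform $L^\infty$ bound is unnecessary (the BKR estimate already gives it locally, and the paper's proof does not use Theorem \ref{t.mainbdd} here); and your use of Schauder estimates for the \emph{forward} problems $\partial_t u=\A_n(t)u$ to prove $G_n(\cdot,s)f\to G(\cdot,s)f$ is legitimate, since there the coefficients appear undifferentiated and are locally H\"older --- that is exactly how the paper (following \cite{klr13}) proceeds.
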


\begin{proof}
The proof is obtained as that of \cite[Proposition 2.9]{klr13}. We give a sketch. Using Schauder interior estimates and a diagonal argument,
one shows that for any $f \in C_c^{2+\varsigma}(\CR^d)$ $G_n(\cdot, s)f$ converges to $G(\cdot, s)f$ locally uniformly. This implies
that the measure $\rho_n(s,y)\, dsdy$ converges weakly to the measure $\rho (s,y)\, dsdy$.

On the other hand, \cite[Corollary 3.11]{bkr06} implies that for a compact set $K \subset \CR^d$ and a compact interval $J \subset (0,r)$ we have
$\|\rho_n\|_{C^\gamma (J\times K)} \leq C$ for certain constants $C>0$ and $\gamma \in (0,1)$ independent of $n$. Thus, by compactness,
a subsequence converges locally uniformly to some continuous function $\psi$ which, by the above, has to be $\rho$.
\qed
\end{proof}

We can now state and prove our main result.

\begin{theorem}\label{t.main}
Assume Hypothesis \ref{hyp3}. Then there exists a positive constant $C_1$, depending only on $d, k$ and $\eta$,
such that
\begin{align}\label{eq.mainest2}
w\rho \leq C_1&\bigg [ c_1^\frac{k}{2}\sup_{s\in (a_0,b_0)}\zeta_{W_1}(s)  + \bigg (
\frac{c_1^{\frac{k}{2}}}{(b_0-b)^{\frac{k}{2}}}+c_2^k
+c_3^{\frac{k}{2}}+c_4^{\frac{k}{2}}+c_8^{\frac{k}{2}}\bigg ) \int_{a_0}^{b_0}\zeta_{W_1}(s)\, ds \notag\\
&+ \bigg (c_2^{\frac{k}{2}}c_6^{\frac{k}{2}}+c_5^k+c_6^k+c_7^k+c_9^k\bigg )\int_{a_0}^{b_0} \zeta_{W_2}(s)\, ds \bigg]
\end{align}
in $(a,b)\times\CR^d$.
\end{theorem}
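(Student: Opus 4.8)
The plan is to deduce Theorem \ref{t.main} from Theorem \ref{t.mainbdd} by applying the latter to the approximating operators $\A_n$ and passing to the limit $n\to\infty$. First I would record that, by Lemma \ref{l.prop}, each $\A_n$ satisfies Hypothesis \ref{hyp1} on $[0,t]$ and has bounded diffusion coefficients together with their first-order spatial derivatives, so that the theory of Section \ref{sect-3} applies to $\A_n$; moreover $W_1,W_2$ remain time dependent Lyapunov functions for $\partial_s-\A_n(s)$ with respect to the same $h$. Next, by Lemma \ref{lemma-4}, $\A_n$ satisfies Hypothesis \ref{hyp2} with the constants $c_1,c_4,c_6,c_7$ unchanged and with $c_2,c_3,c_5$ replaced by $2c_2$, $c_3+\eta c_8$, $c_5+4c_9$, respectively; crucially Hypothesis \ref{hyp2}(3) for $\A_n$ is exactly Hypothesis \ref{hyp3}(3). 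Therefore Theorem \ref{t.mainbdd} yields, for $\rho_n(s,y):=g_n(t,s,x,y)$,
\begin{align*}
w\rho_n \leq C_1&\bigg[ c_1^\frac{k}{2}\sup_{s\in(a_0,b_0)}\zeta^{(n)}_{W_1}(s) + \bigg(\frac{c_1^{\frac{k}{2}}}{(b_0-b)^{\frac{k}{2}}}+(2c_2)^k+(c_3+\eta c_8)^{\frac{k}{2}}+c_4^{\frac{k}{2}}\bigg)\int_{a_0}^{b_0}\zeta^{(n)}_{W_1}(s)\,ds\\
&+\bigg((2c_2)^{\frac{k}{2}}c_6^{\frac{k}{2}}+(c_5+4c_9)^k+c_6^k+c_7^k\bigg)\int_{a_0}^{b_0}\zeta^{(n)}_{W_2}(s)\,ds\bigg]
\end{align*}
on $(a,b)\times\CR^d$, where $\zeta^{(n)}_{W_i}(s):=\int_{\CR^d}W_i(s,y)g_n(t,s,x,y)\,dy$, and where the constant $C_1$ depends only on $d,k,\eta$ and in particular not on $n$ (this is the point of Theorem \ref{t.mainbdd} being independent of $\|Q\|_\infty$). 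Expanding the binomials and absorbing the numerical factors $2^k$, $2^{\frac{k}{2}}$ and the cross terms $(c_3+\eta c_8)^{\frac{k}{2}}\le 2^{\frac{k}{2}}(c_3^{\frac{k}{2}}+(\eta c_8)^{\frac{k}{2}})$, $(c_5+4c_9)^k\le 2^k(c_5^k+4^kc_9^k)$ into the constant $C_1$ (which may now depend additionally on $\eta$ through the $\eta c_8$ term, but $\eta$ is already an allowed parameter), one obtains the right-hand side of \eqref{eq.mainest2} but with $\zeta_{W_i}$ replaced by $\zeta^{(n)}_{W_i}$.

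It then remains to pass to the limit on both sides. On the left, Lemma \ref{l.conv} (with $r=t$) gives $\rho_n\to\rho$ locally uniformly in $(0,t)\times\CR^d$, hence $w\rho_n\to w\rho$ pointwise on $(a,b)\times\CR^d$. On the right, I must control $\zeta^{(n)}_{W_i}$. The key tool is Proposition \ref{p.lyapunov} applied to the evolution family $(G_n(t,s))$: since $W_i$ is a time dependent Lyapunov function for $\partial_s-\A_n(s)$ with respect to $h$, we get the uniform bound $\zeta^{(n)}_{W_i}(s)\le e^{\int_s^t h(\tau)\,d\tau}W_i(t,x)\le e^{\|h\|_{L^1(0,t)}}W_i(t,x)$ for all $n$ and all $s\in[0,t]$. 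This uniform bound immediately makes $\sup_{s}\zeta^{(n)}_{W_1}(s)$ bounded and, combined with dominated convergence once we know $\zeta^{(n)}_{W_i}(s)\to\zeta_{W_i}(s)$ for a.e.\ $s$, handles the integral terms. To get the pointwise convergence $\zeta^{(n)}_{W_i}(s)\to\zeta_{W_i}(s)$ I would argue as follows: the condition $W_i\le c_0Z^{1-\sigma}$ from Hypothesis \ref{hyp3} together with the uniform integrability estimate \eqref{eq.zest} for $G_n(t,s)Z$ (which holds for $\A_n$ with the same $M$, by Lemma \ref{l.prop} and Hypothesis \ref{hyp1}(4)) shows that the family $\{W_i(s,\cdot)g_n(t,s,x,\cdot)\,dy: n\in\CN\}$ is uniformly integrable — indeed $\int_{|y|>R}W_i\,g_n\,dy\le c_0\big(\int_{|y|>R}Z\,g_n\,dy\big)^{1-\sigma}(\int g_n\,dy)^{\sigma}\le c_0(Z(x)+M)^{1-\sigma}(\inf_{|y|>R}Z)^{-(1-\sigma)}$, which is small uniformly in $n$ — and the measures $g_n(t,s,x,\cdot)\,dy$ converge weakly to $g(t,s,x,\cdot)\,dy$ (as noted in the proof of Lemma \ref{l.conv}); uniform integrability plus weak convergence gives convergence of the integrals against the continuous function $W_i(s,\cdot)$.

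The main obstacle is precisely this last convergence step for the Lyapunov integrals: one needs enough uniform control on the tails of the kernels $g_n(t,s,x,\cdot)$ to justify exchanging the limit with the spatial integration defining $\zeta^{(n)}_{W_i}$, and this is exactly where the hitherto unused hypothesis $W_2\le c_0 Z^{1-\sigma}$ enters — it is what converts the uniform bound on $G_n(t,s)Z$ (a consequence of the Lyapunov property of $Z$ for every $\A_n$) into uniform integrability of $W_i g_n$, rather than just a uniform bound on the integrals. Everything else is bookkeeping: matching the constants in \eqref{eq.mainest} for the modified $c_2,c_3,c_5$ to the form \eqref{eq.mainest2}, and invoking Lemma \ref{l.conv} and (the $\A_n$-version of) Proposition \ref{p.lyapunov}. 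One should also note that Hypothesis \ref{hyp3} asks for $w\in C^2(\CR^d)$ (time-independent) rather than $w\in C^{1,2}(Q(0,t))$, which makes $\partial_s w=0$ and $c_4$ harmless, but this is a simplification, not a difficulty.
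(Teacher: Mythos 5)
Your proposal follows the paper's proof essentially verbatim: apply Theorem \ref{t.mainbdd} to $\A_n$ via Lemmas \ref{l.prop} and \ref{lemma-4}, use Lemma \ref{l.conv} for $w\rho_n\to w\rho$, and control $\zeta^{(n)}_{W_j}\to\zeta_{W_j}$ by splitting off the tail with $W_j\le c_0Z^{1-\sigma}$, H\"older, the uniform bound on $\int Zg_n$ and tightness. The only point to make explicit is that your tail bound is uniform in $s$ and $n$, so the convergence of $\zeta^{(n)}_{W_j}$ is in fact uniform on $(a_0,b_0)$ (as the paper states), which is what you need to pass to the limit in the term $\sup_{s\in(a_0,b_0)}\zeta^{(n)}_{W_1}(s)$ rather than merely in the integrals.
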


\begin{proof}
We apply Theorem \ref{t.mainbdd} to the operators $\A_n$. Taking Lemma \ref{lemma-4} into account, we obtain
\begin{align}\label{eq.obtainedest}
w\rho_n \leq C_1&\bigg [c_1^\frac{k}{2}\sup_{s\in (a_0,b_0)}\zeta_{1,n}(s)
+ \bigg (c_2^{\frac{k}{2}}c_6^{\frac{k}{2}}+(c_5+4c_9)^k+c_6^k+c_7^k\bigg )\int_{a_0}^{b_0} \zeta_{2,n}(s)\, ds  \notag\\
&+ \bigg (
\frac{c_1^{\frac{k}{2}}}{(b_0-b)^{\frac{k}{2}}}+(2c_2)^k
+(c_3+\eta c_8)^{\frac{k}{2}}+c_4^{\frac{k}{2}}\bigg ) \int_{a_0}^{b_0}\zeta_{1,n}(s)\, ds \bigg],
\end{align}
in $(a,b)$,
where $\zeta_{j,n}(s) := \int_{\CR^d} W_j(x, y) g_n(t,s,x, y)dy$. Note that $\zeta_{j,n}$ is well defined by Proposition \ref{p.lyapunov}, since $W_j$ is also a time dependent Lyapunov function for $\A_n$ by Lemma \ref{l.prop}. Since
$\rho_n \to \rho$ locally uniformly by Lemma \ref{l.conv}, Estimate \eqref{eq.mainest2} follows from
\eqref{eq.obtainedest} upon $n\to \infty$ once we prove that the right-hand sides also converge.

To that end,
it suffices to prove that $\zeta_{j,n}$ converges to $\zeta_{W_j}$ uniformly on $(a_0,b_0)$. Using the estimate
$W_j \leq c_0 Z^{1-\sigma}$ and H\"older's inequality, we find
\begin{align}
|\zeta_{j,n}(s) - \zeta_j(s)| & \leq \int_{\CR^d} W_j(s) |\rho_n(s) - \rho(s)|\, dy\notag\\
& \leq  \int_{B(0,R)} W_j(s) |\rho_n(s) - \rho(s)|\, dy\notag\\
& \qquad + \int_{\CR^d\setminus B(0,R)} W_j(s)\rho_n(s)\, dy + \int_{\CR^d\setminus B(0,R)} W_j(s)\rho (s)\, dy\notag\\
& \leq \|W_j\|_{L^\infty ((a_0,b_0)\times B(0,R))}\|\rho_n-\rho\|_{L^\infty ((a_0,b_0)\times B(0,R))} |B(0,R)|\label{eq.toshow}\\
& \, + c_0 \bigg(\int_{\CR^d\setminus B(0,R)} Z(y) g_n(t,s,x,y)\,dy\bigg)^{1-\sigma}(g_n(t,s,x, \CR^d \setminus B(0,R)))^\sigma\notag\\
& \, + c_0 \bigg(\int_{\CR^d\setminus B(0,R)} Z(y) g(t,s,x,y)\,dy\bigg)^{1-\sigma}(g(t,s,x, \CR^d \setminus B(0,R)))^\sigma,\notag
\end{align}
where $|B(0,R)|$ denotes the Lebesgue measure of the ball $B(0,R)$.
We first note that, as a consequence of Equation \eqref{eq.zest} (which is also valid if $G$ is replaced with $G_n$ since
$Z$ is also a Lyapunov function for $\A_n$), the integrals $\int_{\CR^d} Z(y)g_n(t,s,x,y)\,dy$ are uniformly bounded.
Arguing as in the proof of \eqref{tight}, it is easy to check that the measures $\{g_n(t,s,x,y)\,dy: s\in [0,t]\}$
are tight. Therefore, the last two terms in \eqref{eq.toshow} can be bounded by any given $\eps>0$ if $R$ is chosen large enough. Since $\rho_n \to \rho$ locally uniformly, given $R$, also the first term in \eqref{eq.toshow} can be bounded by $\eps$ if $n$ is large enough. Thus, altogether $\zeta_{j,n} \to \zeta_j$ uniformly on $[a_0,b_0]$. This finishes the proof.
\qed
\end{proof}

\section{Proof of Theorem \ref{t.example}}
Let us come back to the example from Theorem \ref{t.example}. We start by observing that the same computations as in the proof of Proposition
\ref{p.example} show that the function $Z_0(x)=\exp(\delta|x|_{*}^{p+1-m})$ is a Lyapunov function for both the operators $\A_0$ and $\eta\Delta_x-F\cdot\nabla_x$.

To obtain estimates for the Green kernel associated with the operator $\A$, we want to apply Theorem \ref{t.main}. We assume that we are in the situation of Proposition \ref{p.example} and pick $0<\eps_0<\eps_1<\eps_2 < \delta$, where $\delta < 1/\beta$, and $\alpha > \frac{\beta}{m+\beta-2}$. For $\beta\ge 2$, we define the functions
$w, W_1, W_2:[0,t]\times\CR^d$ by
\[
w(s,y) := e^{\eps_0(t-s)^\alpha |y|_{*}^\beta}\quad\mbox{and}\quad W_j(s,y):= e^{\eps_j(t-s)^\alpha |y|_{*}^\beta}.
\]

Let us check the conditions of Theorem \ref{t.main}.
As a consequence of Proposition \ref{p.example}, $W_1$ and $W_2$ are time dependent Lyapunov functions which obviously satisfy $W_1\leq W_2 \leq Z^{1-\sigma}$ for suitable $\sigma$, where $Z(y) := \exp (\delta |y|_{*}^\beta)$. We have to verify that with this choice of $w, W_1$ and $W_2$
Hypothesis \ref{hyp3} is satisfied. As before, we make only computations assuming that $|x| \geq 1$, omitting the details concerning the neighborhood of the origin.

We now fix arbitrary $a_0, b_0\in (0,t)$ with $a_0<b_0$.
Note that $w(s,y)^{-2}\partial_s w(s,y) = -\eps_0\alpha (t-s)^{\alpha-1}|y|^\beta e^{-\eps_0(t-s)^\alpha |y|^\beta}$.
This is clearly bounded. Similarly, one sees that $w^{-2}\nabla_y w$ is bounded.

Let us now turn to part (2) of Hypotheses \ref{hyp2} and \ref{hyp3}. Since $w\leq W_1$, clearly (2)(i) is satisfied with $c_1=1$.
As for (2)(ii), we have
\[
\frac{|Q(s,y)\nabla_y w(s,y)|}{w(s,y)^{1-1/k}W_1(s,y)^{1/k}}  = \eps_0\beta (t-s)^{\alpha} |y|^{\beta -1}(1+|y|^m)e^{-\frac{1}{k}(\eps_1-\eps_0)(t-s)^\alpha |y|^\beta}.
\]
To bound this expression, we note that for $\tau,\gamma, z >0$, we have
\begin{eqnarray*}
z^\gamma e^{-\tau z^\beta} = \tau^{-\frac{\gamma}{\beta}}(\tau z^\beta)^\frac{\gamma}{\beta}e^{-\tau z^\beta} \leq \tau^{-\frac{\gamma}{\beta}}\bigg(
\frac{\gamma}{\beta}\bigg)^\frac{\gamma}{\beta} e^{-\frac{\gamma}{\beta}} =: \tau^{-\frac{\gamma}{\beta}} C(\gamma,\beta)\, ,
\end{eqnarray*}
which follows from the fact that the maximum of the function $t \mapsto t^pe^{-t}$ on $(0,\infty)$ is attained at the point $t=p$. Applying this estimate
in the case where $z = |y|$, $\tau= k^{-1}(\eps_1-\eps_0)(t-s)^\alpha$, $\beta=\beta$ and $\gamma = \beta-1+m$, we get
\begin{align*}
& \phantom{=} \frac{|Q(s,y)\nabla_y w(s,y)|}{w(s,y)^{1-1/k}W_1(s,y)^{1/k}}\\
&  \leq 2\eps_0\beta (t-s)^\alpha \bigg(\frac{\eps_1-\eps_0}{k}\bigg)^{-\frac{\beta
-1+m}{\beta}}(t-s)^{-\alpha \frac{\beta-1+m}{\beta}} C(\beta-1+m, \beta)\\
& =: \bar{c} (t-s)^{-\frac{\alpha(m-1)}{\beta}} \leq \bar{c}(t-b_0)^\frac{-\alpha(m-1)_+}{\beta}\, ,
\end{align*}
for a certain constant $\bar{c}$.\smallskip

Thus we can choose the constant $c_2$ as $\bar{c}(t-b_0)^{-\frac{\alpha(m-1)_+}{\beta}}$, where $\bar{c}$ is a universal constant. Note that $c_2$ depends on the interval $(a_0,b_0)$
only through the factor $(t-b_0)^{-\gamma_2}$. As it turns out, similar estimates show that also for (2)(iii)--(vii) in Hypothesis \ref{hyp2} and in Part (2) of Hypothesis \ref{hyp3} we can choose constants $c_3,\ldots,c_9$ of this form, however with different exponents $\gamma_3,\ldots,\gamma_9$. We now determine the exponents we can choose. To simplify the presentation, we drop constants from our notation and write $\lesssim$ to indicate a constant which merely depends on $d, m, p, r, k, \eps_0, \eps_1, \eps_2$.

As for (iii) we find
\begin{align*}
&\frac{|\mathrm{Tr}(QD^2w(s,y))|}{w(s,y)^{1-2/k}W_1(s,y)^{2/k}}\\
\lesssim &
\big [(t-s)^{\alpha}|y|^{\beta-2+m}+(t-s)^{2\alpha}|y|^{2\beta -2+m}\big ] e^{-\frac{2}{k}(\eps_1-\eps_0)(t-s)^\alpha|y|^\beta}\\
\lesssim & (t-s)^{2\alpha} (t-s)^{-\alpha\frac{2\beta-2+m}{\beta}} \leq (t-b_0)^{-\frac{\alpha (m-2)_+}{\beta}}\,,
\end{align*}
so that here $\gamma_3 = \frac{(m-2)_+}{\beta}$. The estimates
\begin{align*}
\frac{|\partial_sw(s,y)|}{w(s,y)^{1-2/k}W_1(s,y)^{2/k}} & \lesssim (t-s)^{\alpha-1}|y|^{\beta} e^{-\frac{2}{k}(\eps_1-\eps_0)(t-s)^\alpha|y|^\beta}\\
& \lesssim (t-s)^{\alpha -1} (t-s)^{-\alpha} \leq (t-b_0)^{-1}\,,
\end{align*}
\begin{align*}
\frac{w(s,y)^{1/k}|\sum_{i=1}^dD_iq_{ij}(s,y)|}{W_2(s,y)^{1/k}} \lesssim |y|^{m} e^{-\frac{1}{k}(\eps_2-\eps_0)(t-s)^\alpha|y|^\beta}
\lesssim (t-s)^{-\frac{\alpha m}{\beta}} \leq  (t-b_0)^{-\frac{\alpha m}{\beta}}
\end{align*}
and
\begin{align*}
\frac{w(s,y)^{1/k}|F(s,y)|}{W_2(s,y)^{1/k}}=|y|^{p} e^{-\frac{1}{k}(\eps_2-\eps_0)(t-s)^\alpha|y|^\beta}\lesssim (t-s)^{-\frac{\alpha p}{\beta}} \leq  (t-b_0)^{-\frac{\alpha p}{\beta}},
\end{align*}
show that in (iv), resp.\ (v), resp.\ (vi) we can choose $\gamma_4 = 1$, resp.\ $\gamma_5 = \frac{\alpha m}{\beta}$ resp.\ $\gamma_6=\frac{\alpha p}{\beta}$.

A similar estimate as for (vi) shows that in (vii) we can choose $\gamma_7 = \frac{\alpha r}{2\beta}$.\smallskip

Concerning part (2) of Hypothesis \ref{hyp3}, we note that repeating the computations for Hypothesis \ref{hyp2}(2)(ii)-(iii) with $m=0$, we see that in the estimate for
$|\Delta_y w|$ and $|Q\nabla_y W_1|$ we can pick
$c_8=c_9=\bar c$.

Finally for part (3) of Hypothesis \ref{hyp2}, we note that in this special situation the boundedness of the Green kernel for the associated operators without potential term can also be established using time dependent Lyapunov functions. This has been done in \cite{klr13}.\smallskip

We may thus invoke Theorem \ref{t.main}. To that end, given $s \in (0,t)$, we choose $a_0 := \max\{ s -(t-s)/2, s/2\}$ and
$b_0 := s + (t-s)/2$ so that $t-b_0 = (t-s)/2$ and $b_0-a_0 \leq t-s$. Let us note that, as a consequence of  Proposition \ref{p.lyapunov},
\[
\zeta_{W_j}(s,x) \leq \exp \bigg( \int_s^t h(\tau)\, d\tau\bigg) W_j(t,x) = \exp \bigg(\int_s^t h(\tau)\, d\tau \bigg).
\]
Thus, recalling the form of $h$ from the proof of Proposition \ref{p.example}, we see that there exists a constant $H$, depending only on $\alpha, \beta$ and $m$, hence independent of $(a_0,b_0)$, such that
\[
\int_{a_0}^{b_0} \zeta_{W_j}(s)\, ds \leq H (b_0-a_0) \leq H(t-s).
\]
Thus, by Theorem \ref{t.main}, we find that, for a certain constant $C$, we have
\begin{align}
w\rho \leq C \Big( (t-s)^{1-\frac{k}{2}}+(t-s)^{1-\frac{\alpha}{2\beta}((m-1)_++p)k} +(t-s)^{1-\frac{\alpha}{\beta}\Lambda k}
\Big),
\label{eq.lastest}
\end{align}
where $\Lambda=m\vee p\vee\frac{r}{2}$.
To simplify this further, we note first that $$\Lambda \ge \frac{1}{2}((m-1)_++p).$$
Now, let us assume that both
$p> m-1$ and $r > m-2$ so that we can either assume (i) or (ii) in Proposition \ref{p.example}.
Note that in case (i), we have,
by the choice of $\alpha$, that
\[
\frac{\alpha\Lambda}{\beta} \geq \frac{\alpha p}{\beta} > \frac{p}{m+\beta -2} =\frac{p}{p-1} > \frac{1}{2}.
\]
In case (ii), we distinguish the cases $r+m>2$ and $r+m\le 2$. If $r+m>2$ we have
\[
\frac{\alpha\Lambda}{\beta} \geq \frac{\alpha r}{2\beta} > \frac{r}{2(m+\beta -2)} = \frac{r}{r+m -2} > \frac{1}{2},
\]
since $r>m-2$. On the other hand, if $r+m\le 2$, then
\[
\frac{\alpha\Lambda}{\beta} \geq \frac{\alpha p}{\beta} > \frac{p}{p-1} > \frac{1}{2},
\]
Thus, the right-hand side of \eqref{eq.lastest} can be estimated by a constant times
$(t-s)^{1-\frac{\alpha}{\beta}\Lambda k}$.

Therefore, if $p\geq\half (m+r)$, we pick $\beta = p+1-m$. We have,
for $\alpha > \frac{p+1-m}{p-1}$, $\eps < \frac{1}{p+1-m}$,
\begin{align*}
g(t,s,x,y) \leq C(t-s)^{1-\frac{\alpha(m\vee p)k}{p+1-m}}e^{-\eps (t-s)^\alpha |y|_{*}^{p+1-m}},
\end{align*}
for a certain constant $C$. On the other hand, for $p < \half (m+r)$, we pick $\beta = \half (r+2-m)$. So, we obtain
\[
g(t,s,x,y) \leq C (t-s)^{1-\frac{\alpha(2m\vee 2p\vee r)}{2(r+2-m)}k} e^{-\eps (t-s)^\alpha |y|_*^{\half (r+2-m)}},
\]
for $\eps <\frac{2}{r+2-m}$ and $\alpha > \frac{r-m+2}{r+m-2}$ if $r+m>2$, and $\alpha>\frac{r+2-m}{2(p-1)}$ if $r+m\le 2$,
where, again, $C$ is a positive constant independent of $t$ and $s$.
This finishes the proof of Theorem \ref{t.example}.

\end{document}